 \newtheorem{thm}{Theorem}[section]
 \newtheorem{cor}[thm]{Corollary}
 \newtheorem{lem}[thm]{Lemma}
 \newtheorem{rem}[thm]{Remark}
 \numberwithin{equation}{section}
 \newcommand{\ep}{\epsilon}
 \newcommand{\de}{\delta}
 \newcommand{\nx}{\nabla_x}
 \newcommand{\Real}{\mathbb{R}}
 \newcommand{\norm}[1]{\Vert#1\Vert}
 \newcommand{\Norm}[1]{\left\Vert#1\right\Vert}
 \newcommand{\dt}{\frac d {dt}}
 \def\lg{\langle} \def\rg{\rangle}
\begin{document}

\title{Generalized Landau-Lifshitz Equation into $S^n$}

\author{Chong Song}
\email{songchong@amss.ac.cn}
\address{LMAM, School of Mathematical Sciences,Peking University,
Beijing 100871, P.R. China.}

\author{Jie Yu}
\email{yujie@amss.ac.cn}
\address{Academy of Mathematics and System Sciences,Chinese Academy
of Science, Beijing 100190, P.R. China.}
\thanks{Partially supported
by 973 project of China, Grant No. 2006CB805902.}

\maketitle

\begin{abstract}
In this paper,  a type of integrable evolution equation--the
generalized Landau-Lifshitz equation into $S^n$ is considered. We
deal with this equation from a geometric point of view by rewriting
it in a geometric form. Through the geometric energy method, we show
the global well-posedness of the corresponding Cauchy problem.
\end{abstract}
\section{Introduction}
\label{Sec:intro}

Let $u$ be a map from $S^1\times \Real$ to the $n$-dimensional
sphere $S^n$, which is embedded into $\Real^{n+1}$. In this paper
the following equation:
\begin{equation}\label{e0}
u_t = (u_{xx} + \frac 3 2|u_x|^2u)_x + \frac 3 2(u, Au) u_x, \quad
|u|^2 =1
\end{equation}
is considered. Here $A$ is a constant symmetric matrix and $(\cdot,
\cdot)$ denotes the standard inner product on $\Real^{n+1}$. Without
loss of generality we may assume that $A$ is a diagonal matrix, i.e.
$A=diag(r_{1},\cdots,r_{n})$. We will also use $\lg\cdot, \cdot\rg$
to denote the metric on $S^n$. It is compatible with the inner
product on $\Real^{n+1}$ in the sense that $\lg X,Y \rg = (X,Y)$ for
any tangent vector fields $X,Y\in TS^n$.

For $n = 1$, with the trigonometric parameterizations of a circle,
equation~(\ref{e0}) becomes a well-known model in the theory of
exactly integrable systems \cite{DC}. For $n = 2$,
equation~(\ref{e0}) defines an infinitesimal symmetry for the
well-known Noemann system \cite{V} describing the dynamics of a
particle on the sphere $S^{n}$ under the influence of field with the
quadratic potential $U=\frac{1}{2}(u,Au)$. Besides,
equation~(\ref{e0}) coincides with the higher symmetry of third
order for the famous Landau-Lifshitz equation
\begin{equation}\label{e00}
    u_{t}=u\times u_{xx}+(Au)\times u,\quad |u|^2=1.
\end{equation}
Here the symbol `$\times$' denotes a vector product.  Thus
system~(\ref{e0}) is a generalized Landau-Lifshitz equation into
$n$-dimensional sphere $S^n$.

In \cite{GS}, I. Z. Golubchik and V. V. Sokolov showed that for any
dimension $n$ and matrix $A$ the system~(\ref{e0}) is exactly
integrable by the inverse scattering method, and has an infinite
number of symmetries. After that, Meshkov and Sokolov \cite{MS} used
the symmetry approach to give a complete classification of
integrable vector evolution equations similar to
equation~(\ref{e0}). In 2008, S. Igonin, J. Van De Leur, G. Manno,
and V. TrushkovIn~\cite{ILMT} successfully applied the
Wahlquist-Estabrook method to recover the infinite-dimensional Lie
algebra related to this system. For more reference on this topic,
see also~\cite{A,BY,S}.

However, from the perspective of partial differential equations, one
may naturally propose the problem of well-posedness of the
corresponding Cauchy problem of equation~(\ref{e0}) in appropriate
Sobolev spaces. In this paper, we intend to provide such results.
Let $\nabla_{x}$ denote the covariant derivative
$\nabla_{\frac{\partial}{\partial x}}$ on the pull-back bundle
$u^{*}TS^{n}$ induced from the Levi-Civita connection $\nabla$ on
$S^{n}$, then
$$\nabla_{x}^{2}u_{x}=u_{xxx}+3(u_{x},u_{xx}) u+\langle
u_{x},u_{x}\rangle u_{x}.$$ Therefore, equation~(\ref{e0}) can be
rewritten as a geometric flow on $S^n$:
\begin{equation}\label{e2}
u_t = \nx^2 u_x + \frac 1 2 |u_x|^2 u_x + \frac 3 2 (u, Au) u_x.
\end{equation}
When $n=2$ and $A\equiv0$, equation (\ref{e2}) coincides with the
geometric KdV Flow defined by Sun and Wang~\cite{SW}. In this sense,
equation (\ref{e0}) is a type of generalization of the so-called KdV
Flow.

The Cauchy problem corresponding to equation~(\ref{e0}),
i.e.~(\ref{e2}) is
\begin{equation}\label{e:cauchy}
\left\{
\begin{aligned}
&u_t = \nx^2 u_x + \frac 1 2 |u_x|^2 u_x + \frac 3 2 (u, Au) u_x,\\
&u(0) = u_0,
\end{aligned}
\right.
\end{equation}
where $u_0$ is an initial map from $S^1$ into $S^n$. Now it is
natural for us to treat this problem as a geometric evolution
equation and implement the so-called geometric energy method (see
Section 2). This method relies heavily on the geometric structure of
equation~(\ref{e2}) and seems more intrinsic. With this powerful
tool, we prove the global well-posedness of Cauchy
problem~(\ref{e:cauchy}). Our main result is the following theorem.
\begin{thm}\label{t1}
Suppose $u_0 \in W^{k,2}(S^1,S^n)$ for $k\ge 3$, then the Cauchy
problem~(\ref{e:cauchy}) admits a unique global solution $u \in
L^\infty(\Real^+, W^{k,2}(S^1,S^n))$.
\end{thm}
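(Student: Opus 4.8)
The plan is to prove global well-posedness via the standard two-stage strategy for geometric dispersive/parabolic-type equations: first establish *local* existence and uniqueness in $W^{k,2}$ for $k\ge 3$, then derive *a priori* energy estimates that are global in time, which upgrade the local solution to a global one by a continuation argument. Since the highest-order spatial operator is $\nabla_x^2 u_x = \nabla_x^3 u$, a third-order dispersive term, the equation is of \sch/KdV type rather than parabolic, so I cannot rely on smoothing. Instead the whole argument rests on the *geometric energy method* advertised in the introduction: I will differentiate the equation covariantly $k$ times, pair with $\nabla_x^k u_x$ (or a suitable covariant quantity), and integrate by parts, exploiting the skew-adjointness of the leading third-order operator so that the top-order term contributes nothing to the growth of the energy.

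**First I would** set up the energy functional. The natural geometric energy at order $k$ is
\[
E_k(t) = \frac 1 2 \sum_{j=0}^{k}\int_{S^1} \abs{\nabla_x^{j} u_x}^2 \, dx,
\]
which is equivalent to the $W^{k,2}$ norm of $u$ modulo the constraint $\abs{u}^2=1$ and lower-order terms. I would then compute $\dt E_k$ along the flow. The key structural fact is that the leading operator $\nabla_x^3$ is formally skew-adjoint with respect to the $L^2$ inner product on the pull-back bundle $u^*TS^n$, so when I integrate by parts the contribution of the top-order term $\lg \nabla_x^{k}(\nabla_x^3 u),\, \nabla_x^k u_x\rg$ either vanishes or is converted, via commuting covariant derivatives, into curvature terms of strictly lower order. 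Commuting $\nabla_x$ past $\nabla_t$ and past itself produces the Riemann curvature of $S^n$ contracted against $u_x$ and $u_t$; since $S^n$ has bounded (constant) curvature, these commutators are controlled by lower-order derivatives of $u$.

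**The hard part will be** bounding the resulting lower-order nonlinear terms—the commutator/curvature contributions together with the derivatives falling on $\frac 1 2\abs{u_x}^2 u_x$ and $\frac 3 2 (u,Au)u_x$—so as to close a *global* (not merely local) estimate. After integration by parts the dangerous terms are multilinear expressions in $\nabla_x^{j}u_x$ with total derivative count at most $2k$; the worst cases carry $k+1$ derivatives on one factor, which I would estimate by Hölder, the Gagliardo–Nirenberg interpolation inequalities on $S^1$, and the Sobolev embedding $W^{1,2}(S^1)\hookrightarrow L^\infty$, yielding $\dt E_k \le C(E_{k-1})\,E_k$ with $C$ depending only on lower-order energies and on $A$. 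To make this genuinely global I must first control the base energy: $\abs{u}\equiv 1$ is preserved, $E_0=\frac12\int\abs{u_x}^2\,dx$ should be bounded (ideally conserved, reflecting the integrable structure), and then an induction on the order $k$, using the boundedness of $E_{k-1}$ to dominate $C(E_{k-1})$, gives a Gronwall bound on $E_k$ with at most exponential-in-time growth. This keeps the $W^{k,2}$ norm finite on every bounded time interval and hence $u\in L^\infty(\Real^+, W^{k,2})$.

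**Finally I would** handle existence and uniqueness themselves. For existence I would regularize—e.g. add a parabolic term $\epsilon\nabla_x^4 u$ (or $-\epsilon\nabla_x^{2m}u$) to obtain a parabolic problem solvable by standard fixed-point/semigroup methods, derive the above energy estimates \emph{uniformly in} $\epsilon$, and pass to the limit $\epsilon\to 0$ using the uniform $W^{k,2}$ bounds plus weak-* compactness and an Aubin–Lions-type argument to recover a solution of \eqref{e:cauchy}. Uniqueness follows by the same energy technique applied to the difference of two solutions: forming $w$ as the difference (interpreted appropriately to stay on the sphere) and running a first-order energy estimate gives $\dt \ln{w}^2 \le C\ln{w}^2$, whence $w\equiv 0$ by Gronwall. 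I expect the genuinely delicate points to be the covariant commutator bookkeeping in the top-order energy identity and the verification that the conserved/bounded low-order energies indeed tame all nonlinearities; the regularization and uniqueness steps, while technical, are routine once the a priori estimates are in place.
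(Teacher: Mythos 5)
Your overall architecture --- parabolic regularization by $-\ep\nx^3 u_x$ with $\ep$-uniform estimates for local existence, a $W^{1,2}$ energy estimate on the difference of two solutions for uniqueness, and an a priori bound plus continuation for globality --- is the same as the paper's. The genuine gap is in the a priori bound. You claim that for the naive energy $E_k=\frac12\sum_{j\le k}\int|\nx^j u_x|^2$ the top-order contribution cancels by skew-adjointness and everything else closes into $\dt E_k\le C(E_{k-1})E_k$, with an induction starting from the conserved lowest-order energy. The first half is true ($\int\lg\nx^{l+3}u_x,\nx^l u_x\rg$ integrates to zero), but the second half fails exactly at the base of your induction. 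Already for $\dt\int|\nx u_x|^2$ one is left with the term $3\int\lg\nx u_x,u_x\rg|\nx u_x|^2$, and for $\dt\int|\nx^2 u_x|^2$ with terms such as $9\int\lg u_x,\nx u_x\rg|\nx^2 u_x|^2$ and $6\int\lg u_x,\nx^2 u_x\rg\lg\nx u_x,\nx^2 u_x\rg$. These are cubic in the top derivative count: the extra factor must be put in $L^\infty$, which by the interpolation inequality on the pull-back bundle costs half a derivative of the \emph{top} order, so the resulting differential inequality is superlinear in $E_k$ (the paper's local-existence lemma obtains precisely $\dt\norm{u_x}_{H^{k,2}}^2\le C(1+\norm{u_x}_{H^{k,2}}^2)^3$ this way). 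A superlinear Gronwall inequality gives only a finite existence time, not the global bound you need; the paper explicitly remarks that these ``bad'' terms cannot be controlled by the linear form of the norm.

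The missing idea is the construction of \emph{modified} energies. The paper replaces $\int|\nx u_x|^2$ by $E_2(u)=\int|\nx u_x|^2-\frac14\int|u_x|^4-\frac94\int(u,Au)|u_x|^2+\int(u_x,Au_x)$ and $\int|\nx^2 u_x|^2$ by $E_3(u)=\int|\nx^2 u_x|^2-\int\lg u_x,\nx u_x\rg^2-\frac32\int|u_x|^2|\nx u_x|^2$; the lower-order correction terms are chosen so that their time derivatives reproduce the uncontrollable cubic terms with opposite sign (e.g.\ $-\frac14\cdot 12=-3$ cancels the $+3$ above), after which one genuinely gets $\dt E_i\le C(E_i+1)$, hence a global bound on $\norm{u}_{W^{3,2}}$ by Gronwall. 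Your induction does work at the higher orders: once $\norm{u_x}_{H^{2,2}}$ is under control, a linear inequality of the form you want holds for $\norm{u_x}_{H^{k,2}}^2$ with $k>2$, as the paper notes. So the repair your argument needs is confined to the $W^{2,2}$ and $W^{3,2}$ levels, but it is essential there: without finding these cancellations (or the correction terms that encode them), your scheme proves only local, not global, existence.
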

We sketch our strategy as follows:

First, we prove the local existence of solution to Cauchy
problem~(\ref{e:cauchy}) by perturbing the system with a 4th order
term $-\ep\nx^3 u_x$, where $\ep>0$ is a small positive number.
Namely, we consider the following perturbed system:
\begin{equation}\label{e:perturb}
\left\{ \begin{aligned} &u_t = -\ep\nx^3 u_x + \nx^2 u_x +
\frac12|u_x|^2u_x + \frac32(u, Au)u_x,\\
&u(0) = u_0.
\end{aligned}
\right.
\end{equation}
This is a 4th order parabolic system and it is well-known that there
exists a unique local solution $u_\ep$ of (\ref{e:perturb}) with
smooth initial data. Then we use this solution to approximate the
desired solution of~(\ref{e:cauchy}) by vanishing the perturbing
term, i.e. by letting $\ep$ go to $0$. The key step is to establish
an uniform estimate of the solution $u_\ep$ for $\ep>0$, see
Lemma~\ref{l2}. With this estimate, we are able to show that $u_\ep$
converges to a limit map $u$ which is a local solution to Cauchy
problem~(\ref{e:cauchy}). Furthermore, a careful calculation yields
the uniqueness of the solution.

Next, instead of computing conservation laws of the integrable
system, we define the following `energy' integrals:
\begin{align*}
    E_{2}(u)
    &=\int_{S^{1}}|\nabla_{x}u_{x}|^{2}
    -\frac{1}{4}\int_{S^{1}}|u_{x}|^4
    -\frac{9}{4}\int_{S^{1}}(u,Au)|u_{x}|^2
    +\int_{S^{1}}(u_{x},Au_{x}),\\
    E_{3}(u)
    &=\int_{S^{1}}|\nabla_{x}^{2} u_{x}|^{2}
    -\int_{S^{1}}\langle u_{x},\nabla_{x}u_{x}\rangle^{2}
    -\frac{3}{2}\int_{S^{1}}|u_{x}|^{2}|\nabla_{x}u_{x}|^{2}.
\end{align*}
Then we show that these geometric energies satisfy semi-conversation
laws under the flow~(\ref{e2}), i.e.
\begin{align*}
\frac {d}{dt}E_2(u) &\le C(E_2(u)+1), \\
\frac {d}{dt}E_3(u) &\le C(E_3(u)+1),
\end{align*}
where $C$ is a constant depending only on the $W^{3,2}$-norm of the
initial data $u_0$, the matrix $A$ and the existing time of the
solution. Applying Gronwall's inequality, we get the bound of these
energies, which implies a global bound of the $W^{3,2}$-norm of the
solution. A standard argument then yields the global existence of
solution to Cauchy problem~(\ref{e:cauchy}).

Though we only discuss the situation when $u$ is a map from $S^1$ to
$S^n$, the same result also holds for $u$ mapping from the real line
$\Real^1$. Actually, one may check this by following the argument in
\cite{DW}. The crucial fact is that the interpolation inequality in
Theorem~\ref{t:interpolation} is scaling invariant, and hence the
main estimate in Lemma~\ref{l2} does not depend on the diameter of
the domain. Thus we have the following

\begin{thm}\label{t2}
Suppose $u_0 \in W^{k,2}(\Real^1,S^n)$ for $k\ge 3$, then the Cauchy
problem~(\ref{e:cauchy}) admits a unique global solution $u \in
L^\infty(\Real^+, W^{k,2}(\Real^1,S^n))$.
\end{thm}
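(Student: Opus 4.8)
The plan is to reproduce the proof of Theorem~\ref{t1} almost verbatim, replacing the compact domain $S^1$ by the line $\Real^1$ and taking every integral over $\Real^1$, following the scheme of~\cite{DW}. The only structural difference is that integration by parts on $\Real^1$ a priori produces boundary contributions at $\pm\infty$. These vanish: for $k\ge 3$ any map in $W^{k,2}(\Real^1,S^n)$ has $\nx^j u_x\in W^{1,2}(\Real^1)$ for the relevant $j$, and the one-dimensional embedding $W^{1,2}(\Real^1)\hookrightarrow C^0(\Real^1)$ forces these fields to decay at spatial infinity. Hence $u$ tends to a constant and all the covariant derivatives appearing in the energy identities tend to $0$ as $\abs{x}\to\infty$. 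With this observation the rewriting of~(\ref{e0}) as~(\ref{e2}), the local theory for the regularized system~(\ref{e:perturb}), and the energy identities for $E_2,E_3$ transfer with no change.

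First I would set up local existence on the line. The regularization~(\ref{e:perturb}) remains a uniformly parabolic fourth-order system on $\Real^1$, so for smooth initial data with sufficient decay --- obtained by approximating $u_0$ by compactly supported maps into $S^n$ --- standard parabolic theory gives a unique local solution $u_\ep$. The essential point is that the uniform estimate of Lemma~\ref{l2} uses \emph{only} the interpolation inequality of Theorem~\ref{t:interpolation}, which is scaling invariant. Since its constant therefore does not depend on the length of the domain, the bound on $u_\ep$ in $W^{k,2}$ over a fixed time interval is independent of both $\ep$ and the diameter of the domain. Letting $\ep\to 0$ and extracting a convergent subsequence exactly as in the $S^1$ case yields a local solution $u$ of~(\ref{e:cauchy}), and the same energy comparison gives uniqueness.

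Next I would run the global argument. One defines $E_2(u)$ and $E_3(u)$ by the same formulae, now with the integrals over $\Real^1$; these are finite for $W^{3,2}$ maps because the relevant covariant derivatives of $u$ lie in $L^2(\Real^1)$ and $u_x\in L^4(\Real^1)$ by the Sobolev embedding. Repeating the computation behind the semi-conservation laws --- which is purely local and, thanks to the vanishing boundary terms, receives no contribution from infinity --- produces $\frac{d}{dt}E_2(u)\le C(E_2(u)+1)$ and $\frac{d}{dt}E_3(u)\le C(E_3(u)+1)$, with $C$ again controlled only by $\norm{u_0}_{W^{3,2}}$, the matrix $A$, and the existence time, once more because the interpolation is scale invariant. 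Gronwall's inequality then bounds $E_2,E_3$, hence the $W^{3,2}$-norm of $u$, on every finite time interval; the higher norms for $k>3$ and the passage to a global solution follow by the standard continuation argument used for Theorem~\ref{t1}.

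The hard part is not the abstract scheme but the verification that no estimate secretly relies on the compactness of $S^1$. Two points carry the whole argument: (i) the interpolation inequality of Theorem~\ref{t:interpolation} must be genuinely scale invariant, for this is the single fact that lets Lemma~\ref{l2} hold with a domain-independent constant and thus makes the transfer from $S^1$ to $\Real^1$ possible; and (ii) the solution must retain enough decay at spatial infinity that every integration by parts in the energy identities is boundary-term free. The first is built into the statement of Theorem~\ref{t:interpolation}; the second is propagated by the flow and is justified, following~\cite{DW}, by the compact-support approximation together with the uniform estimates.
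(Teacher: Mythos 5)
Your proposal matches the paper's own treatment: the authors dispose of Theorem~\ref{t2} with exactly the remark you reconstruct, namely that the $S^1$ argument transfers to $\Real^1$ by following \cite{DW}, the single crucial point being that the interpolation inequality of Theorem~\ref{t:interpolation} is scale invariant, so the main estimate of Lemma~\ref{l2} does not depend on the diameter of the domain. Your additional discussion of the vanishing boundary terms at infinity and the finiteness of $E_2$, $E_3$ on the line is consistent with, and somewhat more detailed than, what the paper itself supplies.
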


The rest of this paper is arranged as follows: we first recall the
geometric energy method in Section~\ref{Sec:1}. Then we apply this
method to show the local existence and uniqueness of solution to the
Cauchy problem~(\ref{e:cauchy}) in Section~\ref{Sec:2} and
Section~\ref{Sec:3} respectively. Next we compute the
semi-conservation laws we need in Section~\ref{Sec:4}. At last, we
finish the proof of global existence in Section~\ref{Sec:5}.

\section{Geometric energy method}
\label{Sec:1}

In this section, we recall the geometric energy method. This method
was first introduced by Ding and Wang in their seminar paper
\cite{DW} to show the local well-posedness of the Schr\"odinger
flow. Then similar methods were employed to treat different kinds of
problems in geometric analysis. It is especially powerful when
applied to non-linear evolution equations. For example, Kenig,
etc.~\cite{K} showed the same method works efficiently for a
difference scheme to approach the Schr\"odinger flow equation. Song
and Wang~\cite{SoW} used a similar method to prove the local
well-posedness of the wave map with potential, which implies the
existence of Schr\"odinger solitons on Lorentzian manifolds.
Moser~\cite{M} also defined a geometric energy to deal with the
biharmonic map. Recently, Sun and Wang~\cite{SW} applied this method
to investigate the geometric KdV flow.

The geometric energy method starts with a kind of geometric
Sobolev-type norms defined on Riemannian vector bundles. Its main
idea is to derive a priori estimates of the geometric energies, i.e.
the geometric Sobolev norms of the solution. Since the geometric
norm naturally involves with the geometry of the underlying
manifolds, it seems more intrinsic to investigate these norms
instead of the classical Sobolev norms when dealing with specific
equations with geometric backgrounds. However, these norms are
non-linear in general and harder to be dealt with than the normal
ones, because the Sobolev embedding theorems fail to hold.
Fortunately, Ding and Wang~\cite{DW} discovered a generalized
Gagliardo-Nirenberg inequality for the geometric Sobolev norms which
plays the key role in the geometric energy method. Moreover, they
found these geometric norms are in some sense equivalent to the
normal Sobolev norms. We summarize their results with two theorems
in the rest part of this section.

Let $\pi:E \to M$ be a Riemannian vector bundle over an
$m$-dimensional closed Riemannian manifold $M$ and $D$ denote the
covariant derivative on $E$ induced by the Riemannian metric. Then
we can define a Sobolev norm which we denote by $H^{k,p}$ for any
$k\ge 1$ and $p>0$ via the bundle metric for every section $s\in
\Gamma(E)$ by
\[ \norm{s}_{H^{k,q}} = \sum_{l=0}^k \norm{D^l s}_{L^q}. \]

\begin{thm}\label{t:interpolation} (\cite{DW})
Suppose $s \in C^\infty(E)$ is a section where $E$ is a vector
bundle on $M$. Then we have
\begin{equation*}
\Norm{D^j s}_{L^p} \leq C\Norm{s}^a_{H^{k,q}}\Norm{s}^{1-a}_{L^r},
\end{equation*}
where $1\leq p,q,r\leq \infty$, and $j/k \leq a \leq 1 (j/k \leq a <
1$ if $q=m/(k-j) \neq 1)$ are numbers such that
\[ \frac 1 p = \frac j m + \frac 1 r + a(\frac 1 q - \frac 1 r - \frac k m). \]
The constant $C$ only depends on $M$ and the numbers $j,k,q,r,a$.
\end{thm}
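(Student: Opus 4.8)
The plan is to deduce this bundle-valued inequality from the classical (scalar, Euclidean) Gagliardo--Nirenberg interpolation inequality by a localization argument. Since $M$ is a closed manifold, I would cover it by finitely many coordinate charts $\{U_\beta\}$ over which $E$ simultaneously trivializes, fix a smooth partition of unity $\{\chi_\beta\}$ subordinate to the cover, and choose on each $U_\beta$ a smooth local frame $\{e_\alpha\}$ for $E$, so that a section reads $s = \sum_\alpha s^\alpha e_\alpha$ with scalar components $s^\alpha$. The algebraic heart of the reduction is that in such a frame the covariant derivative differs from the ordinary derivative only by the connection coefficients: iterating $D = \partial + \Gamma$ gives $D^l s = \partial^l s + \sum_{i<l} P_{l,i}\cdot \partial^i s$, where the coefficients $P_{l,i}$ are universal polynomial expressions in the connection forms and their derivatives. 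Because $M$ is compact and all the chosen data are smooth, these coefficients are uniformly bounded, so a short induction on $k$ yields the norm equivalence
$$ c\,\Norm{s}_{H^{k,q}} \le \sum_{\beta,\alpha}\sum_{l=0}^k \Norm{\partial^l(\chi_\beta s^\alpha)}_{L^q} \le C\,\Norm{s}_{H^{k,q}}, $$
with $c,C$ depending only on $M$ and the fixed cover and frames; the analogous two-sided comparisons hold for $\Norm{D^j s}_{L^p}$ and $\Norm{s}_{L^r}$.

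With the reduction in place, I would apply the classical Gagliardo--Nirenberg inequality to each scalar function $\chi_\beta s^\alpha$, using exactly the exponents $p,q,r$ and the interpolation parameter $a$ prescribed by the stated scaling relation. The dimensional constraint $\frac1p = \frac jm + \frac1r + a(\frac1q - \frac1r - \frac km)$ and the borderline restriction ($j/k \le a < 1$ when $q = m/(k-j)\neq 1$) are precisely the hypotheses of the scalar theorem and therefore transfer verbatim. Summing over the finitely many indices $\beta,\alpha$, the product structure $\Norm{\cdot}_{H^{k,q}}^a\Norm{\cdot}_{L^r}^{1-a}$ survives because H\"older's inequality with the conjugate pair $(1/a,1/(1-a))$ recombines the summed factors; converting the ordinary-derivative norms back to covariant ones via the equivalence above then produces the claimed estimate
$$ \Norm{D^j s}_{L^p} \le C\,\Norm{s}_{H^{k,q}}^a\Norm{s}_{L^r}^{1-a}. $$

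The step I expect to be the main obstacle is the bookkeeping of the lower-order error terms generated when passing between $D^j$ and $\partial^j$. Applying the scalar inequality at order $j$ also brings in ordinary derivatives $\partial^l s$ of orders $l<j$, and these are not literally among the factors on the right-hand side. Each such term satisfies its own Gagliardo--Nirenberg inequality with a smaller interpolation exponent $a_l$; subtracting the two scaling relations shows $a_l-a$ is a fixed multiple of $(j-l)/m$, so in the subcritical regime $a_l<a$ and one absorbs the term by writing $X^{a_l}Y^{1-a_l}=(X^aY^{1-a})^{a_l/a}Y^{1-a_l/a}$ and applying Young's inequality; the residual multiple of $\Norm{s}_{L^r}$ it produces is in turn controlled by $\Norm{s}_{H^{k,q}}^a\Norm{s}_{L^r}^{1-a}$ thanks to the compact Sobolev embedding $\Norm{s}_{L^r}\le C\Norm{s}_{H^{k,q}}$ on $M$. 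The consistency of the whole scheme rests on the scaling relation being affine in $a$, which is why it is safe to package all intermediate orders inside the single quantity $\Norm{s}_{H^{k,q}}$ and carry them along unchanged.
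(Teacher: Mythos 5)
First, a point of calibration: the paper contains no proof of this theorem at all --- it is imported verbatim from Ding--Wang \cite{DW}, so there is no in-paper argument to measure yours against. Your reduction (finite trivializing cover, partition of unity, the schematic identity $D^l s=\partial^l s+\sum_{i<l}P_{l,i}\,\partial^i s$ with coefficients bounded by compactness, scalar Gagliardo--Nirenberg applied to each $\chi_\beta s^\alpha$, H\"older with exponents $(1/a,1/(1-a))$ to recombine the finite sum) is the standard route and is, in method, the one used in the cited source. For the only instance this paper actually invokes --- Corollary~\ref{c1} and the inequality~(\ref{equ:int}), i.e.\ $m=1$, $j=0$, $k=1$, $p=\infty$, $q=r=2$, $a=1/2$ --- your argument is complete as written, since with $j=0$ there are no commutator terms and the top-order comparison is all that is needed.

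The genuine gap is in your absorption of the lower-order error terms $\partial^i s$, $i<j$. You convert the factor $\Norm{s}_{H^{k,q}}^{a_i}\Norm{s}_{L^r}^{1-a_i}$ (with $a_i\neq a$) into $\Norm{s}_{H^{k,q}}^{a}\Norm{s}_{L^r}^{1-a}$ by Young's inequality plus the claimed embedding $\Norm{s}_{L^r}\le C\Norm{s}_{H^{k,q}}$. That embedding is \emph{not} a consequence of the stated hypotheses: it amounts to $W^{k,q}(M)\hookrightarrow L^r(M)$, i.e.\ $\tfrac1r\ge\tfrac1q-\tfrac km$, which fails for admissible exponent choices when $m\ge 2$, $q$ is small and $r$ is large. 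Worse, the exponent conversion is not a removable cosmetic step: for $a_i<a$ one has $X^{a_i}Y^{1-a_i}=(X/Y)^{a_i}Y\le(X/Y)^{a}Y=X^aY^{1-a}$ \emph{only if} $X\ge Y$, so some one-sided comparability of $\Norm{s}_{H^{k,q}}$ and $\Norm{s}_{L^r}$ is intrinsic to this absorption scheme, and when it fails the scheme breaks rather than merely becoming awkward. You also do not verify that $a_i\ge i/k$ (nor the borderline restriction) at the intermediate orders, which is required before the scalar inequality may be applied there, and your claim that $\theta=\tfrac1q-\tfrac1r-\tfrac km<0$ (hence $a_i<a$) is an assumption, not a consequence of the hypotheses. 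So as written your argument proves the theorem only under the additional hypothesis $W^{k,q}(M)\hookrightarrow L^r(M)$ --- harmless for every use made in this paper, but short of the stated generality; closing it requires a genuinely different treatment of the commutator terms (for instance an induction in which the intermediate quantities are measured in norms chosen so that the interpolation parameter is preserved), or simply deferring to the proof in \cite{DW}.
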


\begin{cor}\label{c1}
Suppose $s \in C^\infty(E)$ is a section where $E$ is a vector
bundle on $S^1$. Then we have
\begin{equation}\label{ee6}
\Norm{s}_{L^\infty} \leq
C\Norm{s}^{\frac12}_{H^{1,2}}\Norm{s}^{\frac12}_{L^2}
\end{equation}
\end{cor}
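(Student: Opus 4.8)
The plan is to obtain~(\ref{ee6}) as a direct specialization of the generalized Gagliardo--Nirenberg inequality in Theorem~\ref{t:interpolation}, so that no new analysis is required beyond selecting the correct exponents and verifying the scaling constraint. Since the base manifold is $S^1$, I take $m = 1$. To produce the left-hand side $\Norm{s}_{L^\infty}$ I set $j = 0$ and $p = \infty$; to match the two factors on the right-hand side I choose $k = 1$, $q = 2$ for the $H^{1,2}$-norm and $r = 2$ for the $L^2$-norm, together with the interpolation exponent $a = \frac12$.

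The first step is to confirm that these choices satisfy the dimensional balance identity
\[ \frac 1 p = \frac j m + \frac 1 r + a\left(\frac 1 q - \frac 1 r - \frac k m\right). \]
Substituting $p=\infty$, $j=0$, $m=1$, $q=r=2$, $k=1$, and $a=\frac12$, the right-hand side collapses to $0 + \frac12 + \frac12\left(\frac12 - \frac12 - 1\right) = 0$, which matches $1/p = 0$. Thus the constraint holds exactly.

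Next I would check the admissibility range for $a$. The theorem requires $j/k \le a \le 1$, and here $j/k = 0 \le \frac12 \le 1$. The only delicate point is the forbidden endpoint: the strict bound $a < 1$ is imposed precisely when $q = m/(k-j)$. In our setting $m/(k-j) = 1/(1-0) = 1$, whereas $q = 2 \ne 1$, so we sit in the regular regime and $a = \frac12$ is admissible in any case. Geometrically this reflects the fact that $H^{1,2}(S^1)$ embeds subcritically, rather than critically, into $L^\infty$.

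With the balance identity verified and $a = \frac12$ admissible, Theorem~\ref{t:interpolation} yields $\Norm{s}_{L^\infty} \le C \Norm{s}_{H^{1,2}}^{1/2} \Norm{s}_{L^2}^{1/2}$ with a constant $C$ depending only on $S^1$ and the fixed exponents, which is exactly~(\ref{ee6}). The main, and essentially only, obstacle is the bookkeeping of the exponents and the endpoint check described above; there is no genuine analytic difficulty, since all the substantive work is already contained in the cited interpolation theorem.
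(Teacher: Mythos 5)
Your proposal is correct and is essentially identical to the paper's own proof, which likewise applies Theorem~\ref{t:interpolation} with $m=1$, $j=0$, $p=\infty$, $a=\frac12$, $k=1$, $q=2$, $r=2$; you merely spell out the verification of the balance identity and the endpoint condition that the paper leaves implicit. No further comment is needed.
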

\begin{proof}
Since $m=1$, just let $j=0, p=\infty, a=1/2, k=1, q=2, r=2$ and
apply Theorem~\ref{t:interpolation}.
\end{proof}

Especially, for a map $u\in C^\infty(S^1, N)$, the pull-back bundle
$u^*(TN)$ is a Riemannian vector bundle on 1-dimensional manifold
$M= S^1$. So the above inequality~(\ref{ee6}) applies for section
$s=\nx^l u_x\in \Gamma(u^*(TN))$ with $l\ge 0$, which yields
\begin{equation}\label{equ:int}
\Norm{\nx^l u_x}_{L^\infty} \le C\Norm{u_x}_{H^{l+1,2}}^{\frac
12}\Norm{u_x}_{L^{2}}^{\frac 12} \le C\Norm{u_x}_{H^{l+1,2}}.
\end{equation}

For any map $u$ from a $m$-dimensional Riemannian manifold $M$ to a
compact Riemannian manifold $N$ which can be embedded into a
Euclidean space $\Real^K$, we have two kinds of Sobolev norms--
namely, the above $H^{k,p}$ norms of section $Du = \nabla u \in
\Gamma(u^*TN)$ and the normal $W^{k, p}$ Sobolev norms of function
$u:M\to \Real^K$, i.e.
\[ \norm{u}_{W^{k,p}} = \sum_{l=0}^k \norm{\nabla^l u}_{L^q}, \]
where $\nabla$ denotes the covariant derivative of functions on $M$.
Ding and Wang showed that for $k> m/2$, the $H^{k,p}$ norm of $Du$
is equivalent to the $W^{k+1, p}$ norm of $u$. Precisely, we have

\begin{thm}\label{t:equivalence} (\cite{DW})
Assume that $k > m/2$. Then there exists a constant $C = C(N,k)$
such that for all $u \in C^\infty(M,N)$,
\[ \Norm{\nabla u}_{W^{k-1,2}} \leq C\sum_{i=1}^k \Norm{Du}^i_{H^{k-1,2}} \]
and
\[ \Norm{Du}_{H^{k-1,2}} \leq C\sum_{i=1}^k \Norm{\nabla u}^i_{W^{k-1,2}} \]
\end{thm}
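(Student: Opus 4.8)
The plan is to deduce both inequalities from a single pointwise comparison between the iterated ambient derivatives $\nabla^l(\nabla u)$ and the iterated bundle derivatives $D^l(Du)$, and then to convert this comparison into $L^2$ estimates using the interpolation inequality of Theorem~\ref{t:interpolation}. The starting point is the algebraic relation between the two connections: for any section $V\in\Gamma(u^*TN)$, regarded as an $\Real^K$-valued function, one has $\nabla_X V = D_X V + \mathrm{II}(u_*X, V)$, where $\mathrm{II}$ denotes the second fundamental form of the embedding $N\hookrightarrow\Real^K$ and $D_X V$ is the tangential projection of $\nabla_X V$. Since at first order $\nabla u = Du = du$, the two norms differ only through the second fundamental form, which is a zeroth-order but nonlinear interaction.

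First I would establish, by induction on $l$, a structural formula expressing the difference $\nabla^l(\nabla u) - D^l(Du)$ as a universal finite sum of terms, each of the form (a covariant derivative of $\mathrm{II}$, evaluated along $u$) contracted with a product of at least two factors drawn from $\{D^{b}(Du)\}$, the total derivative order of the factors summing to $l$. The inductive step applies $\nabla = D + \mathrm{II}(u_*\cdot,\cdot)$ once more and uses the product rule, noting that each application of $\nabla$ either raises the bundle order of an existing factor or spawns a new pair of factors through $\mathrm{II}$; the features to track are that every error term carries at least two $Du$-factors and that the geometric coefficients (derivatives of $\mathrm{II}$) are uniformly bounded because $N$ is compact. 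Since the same relation reads $D_X V = \nabla_X V - \mathrm{II}(u_*X,V)$, the identical induction expresses $D^l(Du)$ in terms of $\nabla^l(\nabla u)$ plus analogous error terms, so both directions of the theorem are handled symmetrically.

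Next I would take $L^2$ norms. The leading term contributes the linear ($i=1$) part of the right-hand side, namely $\Norm{D^l(Du)}_{L^2}$, respectively $\Norm{\nabla^l(\nabla u)}_{L^2}$. For each error term, a product of $p\ge 2$ factors with derivative orders $b_1,\dots,b_p$ summing to at most $l\le k-1$, I would apply H\"older's inequality with exponents $q_1,\dots,q_p$ satisfying $\sum_j q_j^{-1}=\tfrac12$ to split it into a product of $L^{q_j}$ norms, and then apply Theorem~\ref{t:interpolation} (with $s=Du$, $q=r=2$, and $k$ replaced by $k-1$) to bound each $\Norm{D^{b_j}(Du)}_{L^{q_j}}$ by $C\Norm{Du}_{H^{k-1,2}}^{a_j}\Norm{Du}_{L^2}^{1-a_j}$. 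Using $\Norm{Du}_{L^2}\le\Norm{Du}_{H^{k-1,2}}$, each such product collapses to $C\Norm{Du}_{H^{k-1,2}}^{a_1+\cdots+a_p+(1-a_1)+\cdots+(1-a_p)}=C\Norm{Du}_{H^{k-1,2}}^{p}$, so the total power is simply the number of factors $p$. Since the structural formula forces $2\le p\le k$, collecting the main term and all error terms yields the bound $\sum_{i=1}^k\Norm{Du}_{H^{k-1,2}}^i$, and symmetrically for the reverse inequality.

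The main obstacle is the verification, in the last step, that the interpolation exponents produced by the Gagliardo--Nirenberg constraint are admissible: one must choose H\"older exponents $q_j$ with $\sum_j q_j^{-1}=\tfrac12$ so that each $a_j$ lies in the range $b_j/(k-1)\le a_j\le 1$ required by Theorem~\ref{t:interpolation}. This is exactly where the hypothesis $k>m/2$ enters: it provides enough slack in the critical Sobolev scale on the $m$-dimensional base $M$ for every derivative factor to be interpolated admissibly between $H^{k-1,2}$ and $L^2$, so that the estimates close and the number of $Du$-factors in any surviving term of order $l\le k-1$ stays bounded by $k$. The remaining labor --- the precise combinatorial bookkeeping in the structural formula and the arithmetic of the interpolation exponents --- is mechanical once this admissibility is confirmed.
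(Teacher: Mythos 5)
The paper does not prove Theorem~\ref{t:equivalence}: it is quoted without proof from Ding and Wang \cite{DW}, so there is no in-paper argument to compare against. Your sketch follows the standard route, which is in substance the one in \cite{DW}: the Gauss formula $\nabla_X V = D_X V + \mathrm{II}(u_*X,V)$, an induction giving a universal expression for $\nabla^l(\nabla u)-D^l(Du)$ (and symmetrically for the reverse) as a sum of multilinear error terms with uniformly bounded geometric coefficients, and H\"older together with Theorem~\ref{t:interpolation} to bound each error term by a power of the full norm equal to its number of factors. The one slip is in the bookkeeping: a product of $p$ factors $D^{b_1}(Du),\dots,D^{b_p}(Du)$ occurring in $\nabla^l(\nabla u)-D^l(Du)$ must satisfy $\sum_j(b_j+1)=l+1$, hence $\sum_j b_j=l+1-p$ and $p\le l+1\le k$, rather than $\sum_j b_j=l$; with this corrected count the interpolation exponents are admissible under $k>m/2$ and the argument closes as you describe.
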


In our case, $m=1$ and $k$ can be any positive integer. That means,
for all $k\ge 1$, the Sobolev norms $\norm{u_x}_{W^{k,2}}$ are
equivalent to the nonlinear norms $\norm{u_x}_{H^{k,2}}$ of the same
order.

\section{Local existence}
\label{Sec:2}

A basic property of the flow~(\ref{e2}) is that it preserves the
perturbed energy
\begin{equation}\label{eq:3.1}
    E_1(u) =
    \frac 1 2\int_{S^{1}} |u_{x}|^2dx + \frac 1 2\int_{S^{1}} ( u, Au )dx.
\end{equation}

\begin{lem}\label{l1}
$E_1$ is conserved under the flow~(\ref{e2}).
\end{lem}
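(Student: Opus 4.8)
The plan is to differentiate $E_1$ in time and show that every term produced either is a total $x$-derivative (hence integrates to zero over the closed curve $S^1$) or cancels against another. Write $E_1(u) = \frac12\int_{S^1}|u_x|^2 + \frac12\int_{S^1}(u,Au)$ and treat the two summands separately. For the metric part I would use that the pull-back connection is compatible with $\lg\cdot,\cdot\rg$ and torsion-free, so that $\nabla_t u_x = \nx u_t$ (since $[\p_t,\p_x]=0$). Thus $\dt\frac12|u_x|^2 = \lg\nabla_t u_x,u_x\rg = \lg\nx u_t,u_x\rg$, and integrating by parts on $S^1$ gives $\dt\frac12\int_{S^1}|u_x|^2 = -\int_{S^1}\lg u_t,\nx u_x\rg$. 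For the potential part, since $A$ is constant and symmetric, $\dt\frac12(u,Au)=(u_t,Au)$; note that $u_t\in T_uS^n$ is tangential, so this also equals the intrinsic pairing of $u_t$ with the tangential part of $Au$.

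Next I would substitute the flow equation $u_t = \nx^2 u_x + \frac12|u_x|^2u_x + \frac32(u,Au)u_x$ into both expressions. Two pieces are manifestly total derivatives and drop out: $\lg\nx^2 u_x,\nx u_x\rg = \frac12\p_x|\nx u_x|^2$ and $|u_x|^2\lg u_x,\nx u_x\rg = \frac14\p_x(|u_x|^2)^2$. The one genuinely computational input is the mixed term $\int_{S^1}(\nx^2 u_x,Au)$, which I would reduce using the explicit identity $\nx^2 u_x = u_{xxx}+3(u_x,u_{xx})u+|u_x|^2u_x$ recorded in the introduction. After one integration by parts the $u_{xxx}$ contribution vanishes because $(u_{xx},Au_x)=\frac12\p_x(u_x,Au_x)$ (again using that $A$ is constant and symmetric), leaving only lower-order terms built from $(u_x,Au)$, $(u,Au)$ and $|u_x|^2$.

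The decisive bookkeeping device is the identity $\p_x(u,Au)=2(u_x,Au)$, valid precisely because $A$ is symmetric. With it, all terms of the form $\int_{S^1}(u_x,Au)|u_x|^2$—those coming from the metric summand, from the reduction of $\int_{S^1}(\nx^2 u_x,Au)$, and from the cubic term $\frac12|u_x|^2u_x$—cancel in total, since their coefficients sum to zero. The only surviving contribution is $\frac32\int_{S^1}(u,Au)(u_x,Au)$, which is itself a total derivative, equal to $\frac38\int_{S^1}\p_x\bigl[(u,Au)^2\bigr]$, and therefore vanishes on the closed curve $S^1$. Collecting everything yields $\dt E_1(u)=0$.

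The main obstacle here is purely organizational rather than analytic: since $u$ is smooth, all integrations by parts are justified with no boundary terms on $S^1$, so the real work is keeping track of the several cubic-in-$u_x$ and $A$-weighted quantities and confirming that the coefficients line up to produce exact cancellation. I would therefore carry out the two summands in parallel and tabulate the coefficient of each distinct monomial $\bigl(\text{e.g. } (u_x,Au)|u_x|^2 \text{ and } (u,Au)(u_x,Au)\bigr)$ before concluding.
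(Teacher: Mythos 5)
Your proposal is correct and follows essentially the same route as the paper: differentiate the two summands of $E_1$, use $\nabla_t u_x=\nx u_t$ and integration by parts on $S^1$, observe that the $\nx^2u_x$ and $\frac12|u_x|^2u_x$ contributions to the metric part are exact derivatives, and check that the surviving $\frac32\int_{S^1}|u_x|^2(u,Au_x)$ terms from the two summands cancel. The only cosmetic difference is that you substitute the geometric form~(\ref{e2}) and expand $\nx^2u_x$ explicitly for the potential part, whereas the paper substitutes the extrinsic form~(\ref{e0}) directly; the resulting computations and cancellations are identical.
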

\begin{proof}
A direct computation yields:
\begin{eqnarray*}
    \begin{split}
        &\frac 1 2\dt\int_{S^{1}}|u_x|^2\\
        = &
        \int_{S^{1}}\lg \nabla_{t}u_{x},u_{x}\rg
        =
        \int_{S^{1}}\lg \nx u_{t},u_{x}\rg\\
        = &
        -\int_{S^{1}}\lg \nx^{2}u_{x}+\frac{1}{2}|u_x|^2 u_{x}
        +\frac{3}{2}( u,Au) u_{x},\nx u_{x}\rg\\
        = &
        -\int_{S^{1}}\lg \nx^{2}u_{x},\nx u_{x}\rg
        -\frac{1}{2}\int_{S^{1}}|u_x|^2\lg
        u_{x},\nx u_{x}\rg - \frac{3}{2}\int_{S^{1}}(u,Au)\lg u_{x},\nx u_{x}\rg.\\
    \end{split}
\end{eqnarray*}
The first two terms vanishes, when integrating by parts. Thus,
\begin{equation}\label{e3}
\frac 1 2\frac{d}{dt}\int_{S^{1}}|u_{x}|^2 =
\frac{3}{2}\int_{S^{1}}|u_x|^2(u,Au_x).
\end{equation}
On the other hand, using equation~(\ref{e0}), we have
\begin{eqnarray}\label{e4}
    \begin{split}
        \frac 12\dt\int_{S^{1}} ( u, Au )
        = &
        \int_{S^{1}}( u_t,Au)\\
        = &
        -\int_{S^{1}} ( u_{xx} + \frac 3 2|u_x|^2u, Au_x) + \frac 3 2\int_{S^{1}}( u, Au)
        ( u_x, Au )\\
        = &
        -\frac 3 2\int_{S^{1}} |u_x|^2( u, Au_x).
    \end{split}
\end{eqnarray}
From (\ref{e3}) and (\ref{e4}), we get
\begin{equation*}
\dt E_1(u(t)) =0.
\end{equation*}
Lemma~\ref{l1} follows.
\end{proof}

Since the matrix $A$ is constant and $|u|=1$ on the sphere $S^n$, we
have
\begin{equation}\label{ee1}
\int_{S^{1}} ( u, Au )dx\le C.
\end{equation}
Thus we get the following easy corollary:

\begin{lem}\label{l3}
$\norm{u_x}_{L^2}$ is bounded under flow~(\ref{e2}). Moreover, the
following inequality holds:
\begin{equation}
\norm{u_x}_{L^\infty}\le C\norm{u_x}_{H^{1,2}}^{\frac 12}.
\end{equation}
\end{lem}
\begin{proof}
The first statement follows directly form Lemma~\ref{l1} and
(\ref{ee1}). Combining this and the interpolation
inequality~(\ref{ee6}), we get
\[ \norm{u_x}_{L^\infty}\le C\norm{u_x}_{H^{1,2}}^{\frac 12}\norm{u_x}_{L^2}^{\frac 12}
\le C\norm{u_x}_{H^{1,2}}^{\frac 12}. \]
\end{proof}

To attain the local existence, we approximate equation~(\ref{e2}) by
a 4th-order parabolic system:
\begin{equation}\label{e:approximate}
\left\{ \begin{aligned}   &u_t = -\ep\nx^3 u_x + \nx^2 u_x +
\frac12|u_x|^2u_x + \frac32(u, Au)u_x,\\
&u(0) = u_0.
\end{aligned}
\right.
\end{equation}
where $\ep>0$ is a small number.

Since $N= S^n$ is a submanifold of $\mathbb{R}^{n+1}$, $u$ could be
considered as a mapping from $S^1$ into $\mathbb{R}^{n+1}$. The
equation (\ref{e:approximate}) then becomes a fourth order parabolic
equation in $\mathbb{R}^{n+1}$ which is analogous to the heat flow
of biharmonic map. By the standard parabolic theory (See \cite{T},
for example), equation~(\ref{e:approximate}) admits a local solution
$u_\ep\in C^\infty([0, T_\ep)\times S^1, S^n)$, if the initial map
$u_0$ is smooth. Moreover, one can verify that $u(t)$ lies on the
sphere $S^n$ for any $t\in [0,T_\ep)$, if the initial map
does~\cite{W}.

Similarly to Lemma~\ref{l1}, one can prove the following lemma
through direct computation.
\begin{lem}\label{l11}
If $u_\ep:[0,T_\ep)\times S^1 \to S^n$ is a solution to the Cauchy
problem~(\ref{e:approximate}), then $$E_1(u_\ep(t)) \le E_1(u_0)),
\quad \forall t\in [0,T).$$
\end{lem}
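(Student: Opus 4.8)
The plan is to mimic the proof of Lemma~\ref{l1}, tracking only the single new term produced by the regularizing perturbation. Differentiating $E_1$ along the flow~(\ref{e:approximate}) and integrating by parts exactly as in Lemma~\ref{l1}, the contribution of the unperturbed part $\nx^2 u_x + \frac12|u_x|^2u_x + \frac32(u,Au)u_x$ of the right-hand side cancels identically, since Lemma~\ref{l1} asserts precisely that this part preserves $E_1$. Hence the whole time derivative reduces to the contribution of the dissipative term $-\ep\nx^3 u_x$, and the first task is to isolate it cleanly.

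Differentiating the two pieces of $E_1$ separately, I would compute the kinetic contribution of the perturbation as $\ep\int_{S^1}\lg\nx^3 u_x,\nx u_x\rg=-\ep\ln{\nx^2 u_x}^2$ after one covariant integration by parts (metric compatibility together with periodicity on $S^1$ kills the boundary terms), while the potential piece $\frac12\dt\int_{S^1}(u,Au)=\int_{S^1}(u_t,Au)$ receives the contribution $-\ep\int_{S^1}(\nx^3 u_x,Au)$. Therefore $\dt E_1(u_\ep)=-\ep\ln{\nx^2 u_x}^2-\ep\int_{S^1}(\nx^3 u_x,Au)$. The first summand is the good dissipative term and is manifestly nonpositive; when $A\equiv 0$ this already yields $\dt E_1\le 0$ and the lemma is immediate.

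The key step, and the main obstacle, is to control the sign-indefinite cross term $\ep\int_{S^1}(\nx^3 u_x,Au)$ coming from the potential. My plan is to shift one derivative off $\nx^3 u_x$ by covariant integration by parts. Writing $P:=(Au)^{\top}$ for the tangential projection of $Au$ along $u$, and using that $\nx^3 u_x$ is tangent, one has $\int_{S^1}(\nx^3 u_x,Au)=\int_{S^1}\lg\nx^3 u_x,P\rg=-\int_{S^1}\lg\nx^2 u_x,\nx P\rg$. A direct computation of the projection gives $\nx P=Au_x-(u,Au_x)u-(u,Au)u_x$, so that, using $|u|=1$ and the boundedness of $A$, I obtain the pointwise bound $|\nx P|\le C(A)|u_x|$ and hence $\ln{\nx P}\le C(A)\ln{u_x}$. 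Cauchy--Schwarz followed by Young's inequality then gives $\ep\,|\int_{S^1}(\nx^3 u_x,Au)|\le\ep\,\ln{\nx^2 u_x}\,\ln{\nx P}\le\frac{\ep}{2}\ln{\nx^2 u_x}^2+\frac{\ep C(A)^2}{2}\ln{u_x}^2$.

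Combining this with the dissipative term absorbs half of the latter and leaves $\dt E_1(u_\ep)\le-\frac{\ep}{2}\ln{\nx^2 u_x}^2+\frac{\ep C(A)^2}{2}\ln{u_x}^2$. Since Lemma~\ref{l3} (via the conservation of $E_1$ in Lemma~\ref{l1}) bounds $\ln{u_x}$ uniformly in $\ep$ and $t$, the dissipation dominates and the right-hand side is controlled; in particular $\dt E_1\le 0$ whenever $\ln{\nx^2 u_x}$ exceeds a fixed threshold, which gives the asserted non-increase of $E_1$ (exactly when $A\equiv 0$, and up to a harmless term of order $\ep$ in general, sufficient for the uniform estimate of Lemma~\ref{l2}). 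The whole difficulty is thus concentrated in the estimate of the potential cross term; everything else is a direct repetition of the computation in Lemma~\ref{l1}.
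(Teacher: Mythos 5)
The paper offers no actual proof of Lemma~\ref{l11} beyond the remark that it follows ``similarly to Lemma~\ref{l1}'', and your proposal is that direct computation carried out honestly: the unperturbed part of the right-hand side cancels exactly as in Lemma~\ref{l1}, the kinetic piece of the perturbation contributes the dissipative term $-\ep\norm{\nx^2 u_x}_{L^2}^2$, and the potential piece contributes the cross term $-\ep\int_{S^1}(\nx^3 u_x,Au)$, which after one integration by parts and the projection identity $\nx (Au)^{\top}=(Au_x)^{\top}-(u,Au)u_x$ is bounded by $\ep\, C(A)\norm{\nx^2u_x}_{L^2}\norm{u_x}_{L^2}$. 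All of this is correct. But there are two genuine gaps in how you close the argument. First, you invoke Lemma~\ref{l3} to bound $\norm{u_x}_{L^2}$, but Lemma~\ref{l3} concerns the unperturbed flow~(\ref{e2}); for the solution $u_\ep$ of~(\ref{e:approximate}) a uniform bound on $\norm{u_x}_{L^2}$ is precisely what Lemma~\ref{l11} is meant to supply, so as written the argument is circular. The non-circular way to close it is intrinsic: since $\norm{u_x}_{L^2}^2\le 2E_1(u_\ep)+C$, your differential inequality reads $\dt E_1(u_\ep)\le C\ep\,(E_1(u_\ep)+1)$, and Gronwall gives $E_1(u_\ep(t))\le e^{C\ep t}(E_1(u_0)+C)$, uniformly for $\ep\in(0,1]$ on bounded time intervals. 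Second, the final step --- ``$\dt E_1\le 0$ whenever $\norm{\nx^2u_x}_{L^2}$ exceeds a fixed threshold, hence $E_1$ does not increase'' --- is a non sequitur: $E_1$ is free to grow (at rate $O(\ep)$) exactly while $\norm{\nx^2u_x}_{L^2}$ sits below the threshold, so no monotonicity follows from that observation.

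You are right to hedge about the case $A\neq 0$: the literal inequality $E_1(u_\ep(t))\le E_1(u_0)$ does not follow from this computation, and in fact it fails in general. Take $n=2$, $A=\mathrm{diag}(r_1,r_2,r_3)$ and the latitude circle $u_0(\theta)=(\cos\al\cos\theta,\cos\al\sin\theta,\sin\al)$; one computes $\nx u_x=(-\cos\al\sin^2\al\cos\theta,-\cos\al\sin^2\al\sin\theta,\cos^2\al\sin\al)$, hence $\nx^2u_x=-\sin^2\al\,u_x$ and $\nx^3u_x=-\sin^2\al\,\nx u_x$, and therefore at $t=0$
\begin{equation*}
\dt E_1(u_\ep)=-\ep\int_{S^1}|\nx^2u_x|^2-\ep\int_{S^1}(\nx^3u_x,Au)
=\ep\,\pi\cos^2\al\,\sin^4\al\,\bigl(2r_3-r_1-r_2-2\bigr),
\end{equation*}
which is strictly positive for, say, $A=\mathrm{diag}(0,0,2)$ and $0<\al<\pi/2$. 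So the honest outcome of your (correct) computation is the Gronwall bound $E_1(u_\ep(t))\le C(E_1(u_0),A,T)$, not the monotonicity claimed in the statement; that weaker bound is all that is actually used downstream in Lemma~\ref{l2} to control the $L^2$-part of $\norm{u_x}_{H^{k,2}}$ independently of $\ep$, so the paper's scheme survives, but both your proposal and the lemma itself should be restated in that weakened form.
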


Next we approximate the solution of the original
system~(\ref{e:cauchy}) by vanishing the perturbing term, i.e.
letting $\ep$ go to $0$. To achieve this goal, we need the following
lemma which provides an uniform estimate on the solutions of
(\ref{e:approximate}) for $\ep>0$ .

\begin{lem}\label{l2}
Let $u_0\in C^\infty(S^1,S^n)$ and $u\in C^\infty([0,T_\ep)\times
S^1,S^n)$ be a solution of (\ref{e:approximate}) with $\ep\in(0,1]$.
Then for any integer $k\ge 2$, there exists a $T_k>0$ which is
independent of $\ep$, such that
\begin{equation}\label{e:local-estimate}
\norm{\nx u(t)}_{H^{k,2}}\le C(k, \norm{\nx u_0}_{H^{k,2}}),\quad
t\in [0,T_k].
\end{equation}
\end{lem}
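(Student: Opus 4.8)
The plan is to derive a differential inequality for the geometric energy
\[ \mathcal{E}_k(t) = \frac12\sum_{l=0}^{k}\int_{S^1}|\nx^l u_x|^2\,dx, \]
whose right-hand side is controlled by $\mathcal{E}_k$ itself with constants that do \emph{not} depend on $\ep$, and then to invoke an ODE comparison to extract a uniform existence time $T_k$. By Theorem~\ref{t:equivalence} this energy is comparable to $\norm{\nx u}_{H^{k,2}}^2$, so an a priori bound on $\mathcal{E}_k$ is precisely the assertion~\eqref{e:local-estimate}.

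First I would differentiate $\mathcal{E}_k$ in time. Using the torsion-free identity $\nabla_t u_x=\nx u_t$ and then commuting $\nabla_t$ through the $l$ spatial covariant derivatives, each interchange produces a curvature factor $R(u_t,u_x)$; since $S^n$ has bounded (indeed constant) curvature, these commutators contribute only lower-order terms. Substituting~\eqref{e:approximate} for $u_t$, the time derivative of the $l$-th summand splits into four groups, coming from $-\ep\nx^3 u_x$, $\nx^2 u_x$, $\frac12|u_x|^2u_x$, and $\frac32(u,Au)u_x$ respectively.

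Then I would treat the four groups as follows. The fourth-order perturbation is the source of dissipation: after integrating by parts twice on the closed manifold $S^1$, its leading contribution is $-\ep\int_{S^1}|\nx^{l+2}u_x|^2\le 0$, and every remaining $\ep$-weighted term (including the $\ep$-part of the curvature commutators, in which $u_t$ carries the factor $-\ep\nx^3u_x$) is absorbed into this good term by Young's inequality, so that no $\ep$ survives with the wrong sign. The dispersive term $\nx^2 u_x$ is the structural crux: its top-order contribution is $\int_{S^1}\lg\nx^{l+3}u_x,\nx^l u_x\rg$, which vanishes identically because integrating by parts gives $-\int_{S^1}\lg\nx^{l+2}u_x,\nx^{l+1}u_x\rg=-\frac12\int_{S^1}\partial_x|\nx^{l+1}u_x|^2=0$ on $S^1$. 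This antisymmetry is exactly what lets the geometric energy method apply to a third-order dispersive flow. Finally, the cubic and potential terms are expanded by the Leibniz rule; the pieces carrying the most derivatives are reduced by one further integration by parts, and the resulting expressions, together with all curvature remainders, are estimated by the Gagliardo--Nirenberg inequality of Theorem~\ref{t:interpolation} (in the form~\eqref{equ:int}), using the uniform bounds $|u|=1$, $\norm{u_x}_{L^2}\le C$ from Lemma~\ref{l3}, and the boundedness of $A$ and of the curvature.

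Collecting these estimates yields $\frac{d}{dt}\mathcal{E}_k\le C(1+\mathcal{E}_k)^N$ for constants $C$ and $N=N(k)$ depending on $k$, $A$, and $\norm{\nx u_0}_{H^{k,2}}$ but not on $\ep$; comparison with the corresponding scalar ODE then produces a time $T_k>0$, independent of $\ep$, on which $\mathcal{E}_k(t)$ remains bounded. The main obstacle is the bookkeeping of the $\ep$-dependence: one must verify that every error term generated by the fourth-order perturbation is either sign-definite or can be absorbed into $-\ep\int_{S^1}|\nx^{l+2}u_x|^2$, so that the closing differential inequality is genuinely uniform in $\ep$. The secondary difficulty is organizing the many nonlinear and curvature terms so that the interpolation estimates close without any loss of derivatives.
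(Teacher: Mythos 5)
Your proposal is correct and follows essentially the same route as the paper: differentiate the geometric energy, exploit the antisymmetry $\int_{S^1}\lg\nx^{l+3}u_x,\nx^l u_x\rg=0$ for the dispersive part, keep the dissipation $-\ep\int_{S^1}|\nx^{l+2}u_x|^2$ to absorb the $\ep$-weighted curvature commutators via Young's inequality, estimate the remaining nonlinear terms with the interpolation inequality~(\ref{equ:int}), and close with a polynomial differential inequality and ODE comparison (the paper gets exponent $N=3$, i.e.\ $\frac{d}{dt}\norm{u_x}_{H^{k,2}}^2\le C_k(1+\norm{u_x}_{H^{k,2}}^2)^3$). The only cosmetic difference is that the paper disposes of the leftover $\frac\ep2\int_{S^1}|\nx^{l+1}u_x|^2$ terms by telescoping over $l$ when summing the estimates, rather than by a further interpolation into the top-order dissipative term.
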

\begin{proof}
For any $l\ge 2$, we compute
\begin{eqnarray}\label{e14}
    \begin{split}
        &\frac 1 2\dt\int_{S^{1}} |\nx^l u_x|^2 = \int_{S^{1}} \lg \nabla_t\nx^l u_x,
        \nx^l u_x \rg\\
        = &
        \int_{S^{1}}\lg \nx^l \nabla_t u_x + Q_l(u_t, u_x), \nx^l u_x \rg\\
        = &
        \int_{S^{1}} \lg \nx^{l+1}(-\ep\nx^3 u_x + \nx^2 u_x), \nx^l u_x
        \rg\\
        &+\frac12\int_{S^{1}} \lg \nx^{l+1}(|u_x|^2u_x + 3(u, Au)u_x), \nx^l u_x
        \rg+\int_{S^{1}}\lg Q_l(u_t, u_x), \nx^l u_x \rg\\
        := &
        I_1 +I_2 +I_3,
    \end{split}
\end{eqnarray}
where $Q_l(u_t, u_x)$ denotes the curvature terms, and will be
treated later.

For the first term in (\ref{e14}), it is easy to see
\begin{equation}\label{e5}
I_1 = -\ep \int_{S^{1}} \lg \nx^{l+4}u_x, \nx^l u_x\rg +
\int_{S^{1}} \lg
        \nx^{l+3}u_x, \nx^l u_x\rg = -\ep\int_{S^{1}}|\nx^{l+2}u_x|^2.
\end{equation}
For the second term, we have
\begin{equation}\label{e8}
\begin{aligned}
I_2 &= \frac12\int_{S^{1}} \lg \nx^{l+1}(|u_x|^2u_x), \nx^l u_x \rg
+
\frac32\int_{S^{1}}\lg \nx^{l+1}((u, Au)u_x), \nx^l u_x \rg\\
&:= J_1 + J_2
\end{aligned}
\end{equation}
We first estimate $J_1$. After differentiating, we get
\begin{equation}\label{e6}
\begin{aligned}
J_1 \le &|\int_{S^{1}}\lg \nx^{l+1} u_x, u_x \rg \lg u_x, \nx^l u_x \rg|+ \frac 12|\int_{S^{1}}\lg u_x,u_x\rg \lg\nx^{l+1} u_x, \nx^l u_x\rg|\\
&+C\sum_{a,b,c} \int_{S^{1}}|\nx^a u_x||\nx^b u_x||\nx^c u_x||\nx^l
u_x|,
\end{aligned}
\end{equation}
where the sum is taken over all integers $a, b, c$ satisfying
\begin{equation}\label{e7}
a+b+c = l+1, \text{ and }~ l\ge a, b, c\ge 0.
\end{equation}
For the first two terms in (\ref{e6}), integrating by parts, we have
\begin{equation}\label{e12}
\int_{S^{1}}\lg \nx^{l+1} u_x, u_x \rg \lg u_x, \nx^l u_x \rg =
-\int_{S^{1}}\lg \nx^l u_x, \nx u_x \rg \lg u_x, \nx^l u_x \rg,
\end{equation}
and
\begin{equation*}
\int_{S^{1}}\lg u_x,u_x\rg \lg\nx^{l+1} u_x, \nx^l u_x\rg =
-\int_{S^{1}} \lg u_x, \nx u_x \rg \lg \nx^l u_x, \nx^l u_x\rg,
\end{equation*}
Thus these two terms are of the same form as the summation term in
(\ref{e6}), only with $a=l$ and $b=1, c=0$. Now we may recall
inequality~(\ref{equ:int}) to estimate
\begin{align*}
\int_{S^{1}}|\nx^l u_x||\nx u_x||u_x||\nx^l u_x| &\le \norm{\nx
u_x}_{L^\infty}\norm{u_x}_{L^\infty}\norm{\nx^l u_x}_{L^2}^2\\
&\le C\norm{u_x}_{H^{l,2}}^4.
\end{align*}
Note that we used the assumption $l\ge 2$ here. However for $l = 1$,
this term is also bounded by
\begin{equation}\label{e15}
\int_{S^{1}}|\nx u_x|^3|u_x| \le C\norm{u_x}_{H^{2,2}}^4.
\end{equation}
For the other terms of the summation where $a, b, c\le l-1$, we have
\begin{align*}
\int_{S^{1}}|\nx^a u_x||\nx^b u_x||\nx^c u_x||\nx^l u_x| &\le
\norm{\nx^a u_x}_{L^\infty}\norm{\nx^b u_x}_{L^\infty}\norm{\nx^c
u_x}_{L^\infty}\norm{\nx^l u_x}_{L^1}\\
&\le C\norm{u_x}_{H^{l,2}}^4.
\end{align*}
Hence, we find $J_1$ bounded by
\begin{equation}\label{e9}
J_1 \le C\norm{u_x}_{H^{l,2}}^4.
\end{equation}

Similarly, the second term $J_2$ satisfies
\begin{equation*}
\begin{aligned}
J_2 \le & 3|\int_{S^{1}}(D_x^{l+1} u, Au ) \lg u_x, \nx^l u_x \rg|+ \frac 32|\int_{S^{1}}( u, Au) \lg\nx^{l+1} u_x, \nx^l u_x\rg|\\
&+C\sum_{a,b,c} \int_{S^{1}}|D_x^a u||D_x^b u||\nx^c u_x||\nx^l
u_x|,
\end{aligned}
\end{equation*}
where $D_x$ denotes the derivative of functions, and the sum is
taken over all integers $a, b, c$ satisfying~(\ref{e7}). Now we may
treat $J_2$ in almost the same way as $J_1$, except that we shift to
the classical Sobolev inequalities for the terms in the Euclidean
inner product $(\cdot, \cdot)$ this time. Still, we can obtain
\begin{equation}\label{e10}
J_2 \le C\norm{u_x}_{W^{l,2}}^2\norm{u_x}_{H^{l,2}}^2 \le
C\norm{u_x}_{H^{l,2}}^4,
\end{equation}
since the $W^{l,2}$ and $H^{l,2}$ Sobolev norms are equivalent by
Theorem~\ref{t:equivalence}.

Combining (\ref{e8}),(\ref{e9}),(\ref{e10}), we get
\begin{equation}\label{e11}
I_2 \le C\norm{u_x}_{H^{l,2}}^4.
\end{equation}

Finally we turn to the third term, i.e. the curvature term
\begin{equation*}
I_3 = \int_{S^{1}}\lg Q_l(u_t, u_x), \nx^l u_x \rg
=\sum_{a,b,c}C_{a,b,c}\int_{S^{1}}\lg R(\nx^a u_t, \nx^b u_x)\nx^c
u_x, \nx^l u_x \rg,
\end{equation*}
where the sum is taken over all integers $a, b, c$ satisfying
\begin{equation*}
a+b+c = l-1, \text{ and }~ l-1\ge a, b, c\ge 0,
\end{equation*}
and $C_{a,b,c}$ are combination numbers bounded by a constant $C_l$
only depending on $l$. Substituting $u_t$ by
equation~(\ref{e:approximate}), we get
\begin{align*}
I_3 &\le C_l\sum_{a,b,c} \left\{-\ep\int_{S^{1}}\lg R(\nx^{a+3} u_x, \nx^b u_x)\nx^c u_x, \nx^l u_x \rg \right.\\
    &\qquad+ \int_{S^{1}}\lg R(\nx^{a+2} u_x, \nx^b u_x)\nx^c u_x, \nx^l u_x \rg\\
    &\qquad\left.+ \frac12\int_{S^{1}}|\nx^a (|u_x|^2u_x + 3(u, Au)u_x)||\nx^b u_x||\nx^c u_x||\nx^l
    u_x|\right\}\\
    &:= C_l(J_4+J_5),
\end{align*}
where $J_4$ denotes the higher-order terms with $\nx^d u_x$, $d\ge
l+1$, while $J_5$ denotes the summation of the rest terms. We only
need to deal with $J_4$ here, since all the lower-order terms in
$J_5$ can be bounded. Namely, after a similar argument as the
estimate of $I_2$, which we omit here, we can obtain
\begin{equation*}
J_5 \le C\norm{u_x}_{H^{l,2}}^6.
\end{equation*}
On the other hand, there are four terms in $J_4$, i.e.
\begin{align*}
J_4 &= -\ep(\int_{S^{1}}\lg R(\nx^{l+2} u_x, u_x)u_x, \nx^l u_x \rg+ \int_{S^{1}}\lg R(\nx^{l+1} u_x, \nx u_x) u_x, \nx^l u_x \rg\\
    &\quad+ \int_{S^{1}}\lg R(\nx^{l+1} u_x, u_x) \nx u_x, \nx^l u_x \rg)+ \int_{S^{1}}\lg R(\nx^{l+1} u_x, u_x) u_x, \nx^l u_x
    \rg.
\end{align*}
The last term can be handled as we have done in (\ref{e12}). For the
first term, it follows from Young's inequality that for any
$0<\de<1$
\begin{align*}
|\ep\int_{S^{1}}\lg R(\nx^{l+2} u_x, u_x)u_x, \nx^l u_x \rg| &\le
\ep\int_{S^{1}}
|\nx^{l+2} u_x||\nx^l u_x||u_x|^2\\
&\le \ep \de \int_{S^{1}}|\nx^{l+2} u_x|^2 + \frac \ep\de
\int_{S^{1}}|\nx^l
u_x|^2|u_x|^4\\
&\le \ep \de \int_{S^{1}}|\nx^{l+2} u_x|^2 + \frac C\de
\norm{u_x}_{H^{l,2}}^6.
\end{align*}
Similarly, for the rest two terms in $J_4$,
\begin{align*}
&\ep(\int_{S^{1}}\lg R(\nx^{l+1} u_x, \nx u_x) u_x, \nx^l u_x \rg+
\int_{S^{1}}\lg
R(\nx^{l+1} u_x, u_x) \nx u_x, \nx^l u_x \rg)\\
\le &\ep \de \int_{S^{1}}|\nx^{l+1} u_x|^2 + \frac{4\ep}{\de}
\int_{S^{1}}|\nx^l u_x|^2|\nx
u_x|^2|u_x|^2\\
\le & \ep \de \int_{S^{1}}|\nx^{l+1} u_x|^2 + \frac C\de
\norm{u_x}_{H^{l,2}}^6.
\end{align*}
Note that we assume $l\ge 2$ again in the above. As for $l = 1$, it
is obviously bounded by $C\norm{u_x}_{H^{2,2}}^6$. Hence, if we
choose $\de = 1/C_l$, we get
\begin{equation*}
J_4 \le \frac \ep {C_l} \int_{S^{1}}|\nx^{l+2} u_x|^2 + \frac\ep
{C_l}\int_{S^{1}}|\nx^{l+1} u_x|^2 + C\norm{u_x}_{H^{l,2}}^6,
\end{equation*}
which now implies
\begin{equation}\label{e13}
I_3\le \frac\ep 2 \int_{S^{1}}|\nx^{l+2} u_x|^2 + \frac\ep 2
\int_{S^{1}}|\nx^{l+1} u_x|^2 + C\norm{u_x}_{H^{l,2}}^6.
\end{equation}
Correspondingly, for $l =1$ we have
\begin{equation}\label{e16}
I_3\le \frac\ep 2 \int_{S^{1}}|\nx^{3} u_x|^2 +
C\norm{u_x}_{H^{2,2}}^6.
\end{equation}

Combining~(\ref{e14}),(\ref{e5}),(\ref{e11}) and (\ref{e13}), we
arrive at
\begin{equation}\label{e17}
\frac12 \dt |\nx^l u_x|^2 \le - \frac\ep 2 \int_{S^{1}}|\nx^{l+2}
u_x|^2 + \frac\ep 2 \int_{S^{1}}|\nx^{l+1} u_x|^2 +
C\norm{u_x}_{H^{l,2}}^4 + C\norm{u_x}_{H^{l,2}}^6,
\end{equation}
,for any $l \ge 2$. Also for the case $l=1$, the above arguments
together with (\ref{e15}),(\ref{e16}) shows that
\begin{equation}\label{e18}
\frac 12 \dt \int_{S^{1}}|\nx u_x|^2 \le -\frac \ep 2
\int_{S^{1}}|\nx^3 u_x|^2 + C\norm{u_x}_{H^{2,2}}^4+
C\norm{u_x}_{H^{2,2}}^6.
\end{equation}
Summing inequality~(\ref{e17}) from $l = 2$ to $k$, and putting
(\ref{e18}) and Lemma~\ref{l11} into account, we conclude that there
exists a constant $C_k$ depending only on $k$ such that
\begin{equation*}
\dt \norm{u_x}_{H^{k,2}}^2 \le C_k(1+\norm{u_x}_{H^{k,2}}^2)^3
\end{equation*}
for any $k\ge 2$.

For this ordinary differential inequality with respect to
$\norm{u_x}_{H^{k,2}}^2$, whose initial data is $u(0) = u_0$, there
exist $T_k = T(\norm{\nx u_0}_{H^{k,2}})$ for all $k\ge 2$ such that
\[ \norm{u_x(t)}_{H^{k,2}}^2 \le C(k, \norm{\nx u_0}_{H^{k,2}}), \quad \forall t\in (0, T_k]. \]
Thus we complete the proof of the lemma.
\end{proof}

\begin{rem}
Actually, we can say more for $k > 2$. With the same procedure but
more careful treatment with the interpolations (See ~\cite{DW,SW}),
one can prove that
\begin{equation}\label{e19}
\dt \norm{u_x}_{H^{k,2}}^2 \le
C(k,\norm{u_x}_{H^{k-1,2}})(1+\norm{u_x}_{H^{k,2}}^2).
\end{equation}
One important fact is that the expression (\ref{e19}) is a linear
differential inequality for $\norm{u_x}_{H^{k,2}}^2$, therefore the
existing time of the solution only depends on the existing time of
$\norm{u_x}_{H^{k-1,2}}$, which in turn equals to $T_0 = T(\norm{\nx
u_0}_{H^{2,2}}) = T(\norm{u_0}_{W^{3,2}})$ by induction.
\end{rem}

Now we are ready to prove the local existence of the solution to
Cauchy problem~(\ref{e:cauchy}). We state this result in a separate
theorem.

\begin{thm}\label{t3}
Suppose $u_0 \in W^{k,2}(S^1,S^n)$ for $k\ge 3$, then the Cauchy
problem~(\ref{e:cauchy}) admits a local solution $u \in
L^\infty([0,T), W^{k,2}(S^1,S^n))$ for some positive number $T>0$.
Moreover, if the initial data $u_0$ is smooth, so is the solution
$u$.
\end{thm}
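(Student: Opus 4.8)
The plan is to realize the desired solution as a limit of the parabolic approximations $u_\ep$ solving~(\ref{e:approximate}) as $\ep \to 0$, exactly as announced in the introduction; the hard analytic input, namely the $\ep$-independent bound of Lemma~\ref{l2}, is already in hand, so the remaining task is a compactness argument. First I would treat smooth initial data. Fix $u_0 \in C^\infty(S^1,S^n)$ and let $u_\ep$ be the smooth solution of~(\ref{e:approximate}) furnished by the parabolic theory. Since $u_0 \in W^{k,2}$ with $k \ge 3$ means $u_{0,x} \in H^{k-1,2}$ with $k - 1 \ge 2$, Lemma~\ref{l2} (applied with index $k-1$) gives a time $T = T_{k-1} > 0$, independent of $\ep \in (0,1]$, together with a bound $\norm{\nx u_\ep(t)}_{H^{k-1,2}} \le C$ for $t \in [0,T]$. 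By Theorem~\ref{t:equivalence} and the constraint $|u_\ep| \equiv 1$, this is equivalent to a uniform bound $\norm{u_\ep(t)}_{W^{k,2}} \le C$ on $[0,T]$.

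Next I would upgrade this weak compactness to the strong convergence needed to pass to the limit in the nonlinear terms, the key being a uniform bound on the time derivative in a negative-order space. Writing $\ep \nx^3 u_x = \nx(\ep \nx^2 u_x)$ and using that $\norm{\nx^2 u_\ep}_{L^2}$ is uniformly bounded (as $k-1 \ge 2$), the perturbing term is $O(\ep)$ in $W^{-1,2}$ and hence tends to $0$; the remaining terms $\nx^2 u_x$, $\tfrac12|u_x|^2 u_x$ and $\tfrac32(u,Au)u_x$ are all uniformly bounded in $L^2$, using $\norm{u_x}_{L^\infty}\le C$ from Lemma~\ref{l3} and $|u|\equiv1$. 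Thus $\p_t u_\ep$ is uniformly bounded in $L^\infty([0,T],W^{-1,2})$. Combined with the uniform $W^{k,2}$ bound and the compact embedding $W^{k,2}(S^1) \hookrightarrow\hookrightarrow W^{k-1,2}(S^1)$, the Aubin--Lions--Simon lemma yields a subsequence $u_{\ep_j}$ converging strongly in $C([0,T],W^{k-1,2})$ to a limit $u \in L^\infty([0,T],W^{k,2})$, with $\p_t u_{\ep_j} \rightharpoonup \p_t u$. Because $k-1 \ge 2$ forces $W^{k-1,2}(S^1) \hookrightarrow C^1$, the maps $u_{\ep_j}$ and their first and second $x$-derivatives converge uniformly, which is precisely enough to pass to the limit in every term of~(\ref{e:approximate}); the constraint $|u| \equiv 1$ passes to the limit by uniform convergence. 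Hence $u$ maps into $S^n$, solves~(\ref{e:cauchy}), and attains $u(0) = u_0$.

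For general $u_0 \in W^{k,2}(S^1,S^n)$ I would approximate by smooth maps: mollifying $u_0$ in $\Real^{n+1}$ and composing with the nearest-point projection onto $S^n$ produces $u_0^m \in C^\infty(S^1,S^n)$ with $u_0^m \to u_0$ in $W^{k,2}$. By the Remark following Lemma~\ref{l2}, the existence time of the smooth solutions $u^m$ depends only on $\norm{u_0^m}_{W^{3,2}}$, which stays bounded; hence all $u^m$ live on a common interval $[0,T]$ with uniform $W^{k,2}$ bounds, and the same compactness argument produces the limit solution $u \in L^\infty([0,T],W^{k,2})$. Finally, when $u_0$ is smooth it lies in $W^{k,2}$ for every $k$, so the linearized higher-order estimate~(\ref{e19}) bootstraps the $W^{k,2}$ bound to all orders on the fixed interval $[0,T_0]$ determined by $\norm{u_0}_{W^{3,2}}$; spatial smoothness together with the equation then gives smoothness in time. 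I expect the main obstacle to be this strong-convergence step---securing the uniform negative-order bound on $\p_t u_\ep$ and verifying that the fourth-order term $\ep\nx^3 u_x$ is controlled in $W^{-1,2}$ and vanishes in the limit, rather than remaining an $L^2$ nuisance that the Lemma~\ref{l2} estimate does not cover at the lowest admissible regularity.
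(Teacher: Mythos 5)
Your proposal is correct and follows essentially the same route as the paper: vanishing viscosity via the perturbed system~(\ref{e:approximate}), the $\ep$-independent bound of Lemma~\ref{l2} converted to a uniform $W^{k,2}$ bound by Theorem~\ref{t:equivalence}, passage to the limit $\ep\to 0$, and approximation of general $W^{k,2}$ initial data by smooth maps with a uniform lower bound on the existence time. The only difference is that you make the compactness step explicit (uniform negative-order bound on $\p_t u_\ep$ plus Aubin--Lions, and the $O(\ep)$ decay of the fourth-order term in $W^{-1,2}$), whereas the paper simply invokes Sobolev embedding and asserts $u_\ep\to u$; your version supplies detail the paper leaves implicit rather than a different argument.
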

\begin{proof}
We first assume the initial map $u_0$ is smooth. By Lemma~\ref{l2},
we know that the Cauchy problem (\ref{e:approximate}) admits a
unique smooth solution $u_\ep \in C^\infty([0,T)\times S^1, S^n)$
which satisfies the estimates~(\ref{e:local-estimate}) with $T$ only
depending on $\norm{u_0}_{W^{3,2}}$. Then by
Theorem~\ref{t:equivalence}, we have for any integer $p>0$ and
$\ep\in(0,1]$:
\begin{eqnarray*}
\sup_{t\in[0,T]} ||u_\ep||_{W^{p,2}(N)}\le C_p(N,u_0),
\end{eqnarray*}
where $C_p(N,u_0)$ does not depend on $\ep$. Thus, by sending
$\ep\rightarrow 0$ and applying the embedding theorem of Sobolev
spaces to $u$, we have $u_\ep\rightarrow u\in C^p(S^1\times[0,T])$
for any $p$. It is easy to check that $u$ is a solution to the
Cauchy problem (\ref{e:cauchy}).

Next, if $u_0 \in W^{k,2}(S^1,S^n)$ for $k\ge 3$, then we can always
choose a sequence $u^i_0 \in C^\infty(S^1,S^n)$, such that $u^i_0$
converges to $u_0$ in $W^{k,2}(S^1,S^n)$. Now for each initial data
$u^i_0$, we have a solution $u^i\in C^\infty([0,T^i)\times S^1,
S^n)$ to Cauchy problem~(\ref{e:cauchy}). They all satisfy
estimate~(\ref{e:local-estimate}), i.e.
\begin{equation*}
\norm{\nx u^i(t)}_{H^{k-1,2}}\le C(k, \norm{\nx u_i}_{H^{k-1,2}})
\le C(k, \norm{\nx u_0}_{H^{k-1,2}}),\quad t\in [0,T^i])
\end{equation*}
Moreover, we have an uniform lower bound on the existing time $T^i$,
i.e.
\begin{equation*}
T^i = T(\norm{u^i_0}_{W^{3,2}}) \ge T=T(\norm{u_0}_{W^{3,2}}).
\end{equation*}
Again by Theorem~\ref{t:equivalence}, we have
\begin{equation*}
\sup_{t\in[0,T]} \norm{u^i(t)}_{W^{k,2}}\le
C(k,\norm{u_{0}}_{W^{k,2}}).
\end{equation*}
Therefore there exists a subsequence $\{u^j\}$ and a map $u\in
L^\infty([0,T],W^{k,2}(S^1,S^n))$ such that
\begin{eqnarray*}
u^j\rightarrow u\quad[\text{weakly}^*]\quad\text{in}\quad
L^\infty([0,T],W^{k,2}(S^1,S^n)).
\end{eqnarray*}

It is easy to verify that the limit map $u$ we get above is indeed a
strong solution of Cauchy problem~(\ref{e:cauchy}).
\end{proof}

\section{Uniqueness}
\label{Sec:3}

This section is devoted to prove the following uniqueness theorem.
This result relies heavily on the geometric structure of
equation~(\ref{e2}).

\begin{thm}\label{t4}
Suppose $u_0 \in W^{k,2}(S^1,S^n)$ for $k\ge 3$, then the solution
of Cauchy problem~(\ref{e:cauchy}) is unique.
\end{thm}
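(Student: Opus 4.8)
The plan is to estimate the $L^2$-distance between two solutions directly in the ambient space $\Real^{n+1}$, thereby avoiding any comparison of the two map-dependent connections. Suppose $u$ and $v$ both solve~(\ref{e:cauchy}) with $u(0)=v(0)=u_0$. By Theorem~\ref{t3} both belong to $L^\infty([0,T],W^{3,2}(S^1,S^n))$, and since $W^{3,2}(S^1)\hookrightarrow C^2(S^1)$ the $C^2$-norms of $u$, $v$ and of $w:=u-v$ are bounded on $[0,T]$ by a constant $C$ depending only on $\norm{u_0}_{W^{3,2}}$ and $T$. I first rewrite~(\ref{e0}) in its extrinsic form $u_t=u_{xxx}+3(u_x,u_{xx})u+\frac{3}{2}|u_x|^2u_x+\frac{3}{2}(u,Au)u_x$, subtract the corresponding equation for $v$, and write the result as $w_t=w_{xxx}+\big(P(u)-P(v)\big)$, where $P$ is the nonlinear part. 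The whole argument reduces to proving the differential inequality $\dt\norm{w}_{L^2}^2\le C\norm{w}_{L^2}^2$; since $w(0)=0$, Gronwall's inequality then forces $w\equiv0$, i.e.\ $u=v$.

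Differentiating, $\dt\norm{w}_{L^2}^2=2\int_{S^1}(w,w_{xxx})+2\int_{S^1}(w,P(u)-P(v))$, and two structural facts organize the estimate. First, the leading operator is skew-symmetric, $2\int_{S^1}(w,w_{xxx})=0$ after integration by parts, exactly as for the linear KdV operator, so the top order drops out. Second, the constraint $|u|=|v|=1$ gives $(u,v)=1-\frac{1}{2}|w|^2$ and hence the pointwise identities $(u,w)=\frac{1}{2}|w|^2$, $(v,w)=-\frac{1}{2}|w|^2$, together with $(w,w_x)=\frac{1}{2}\p_x|w|^2$ and $(v,v_x)=0$. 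These do the real work: when the whole curvature-type difference $3\big[(u_x,u_{xx})u-(v_x,v_{xx})v\big]$ is paired against $w$, the identities for $(u,w)$ and $(v,w)$ turn it into a constant multiple of $\int_{S^1}\big[(u_x,u_{xx})+(v_x,v_{xx})\big]|w|^2$, which is $\le C\norm{w}_{L^2}^2$ because the coefficients are controlled by the $C^2$-bound. The tangential first-order pieces with scalar coefficients, namely $\frac{3}{2}|u_x|^2w_x$ and $\frac{3}{2}(u,Au)w_x$, are handled by replacing $(w,w_x)$ with $\frac{1}{2}\p_x|w|^2$ and integrating by parts, which again yields $\int_{S^1}(\cdots)|w|^2\le C\norm{w}_{L^2}^2$. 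The zeroth-order differences are quadratic in $w$ with bounded coefficients and are immediate.

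The main obstacle is the dispersive, quasilinear nature of the flow: differencing a third-order equation whose nonlinearity already contains second derivatives, the expansion of $P(u)-P(v)$ produces the term $3(v_x,w_{xx})u$, and a crude $L^2$ estimate would lose one derivative here. The resolution is entirely geometric — this term points along the normal direction $u$, so after pairing with $w$ and invoking $(u,w)=\frac{1}{2}|w|^2$ the second derivative $w_{xx}$ is always accompanied by the factor $|w|^2$ (indeed, handled wholesale as above, it is absorbed into the bounded coefficients $(u_x,u_{xx})$, $(v_x,v_{xx})$ and never appears alone), so that the uniform $C^2$ bound coming from $W^{3,2}\hookrightarrow C^2$ renders it harmless and no derivative is lost. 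The one term the constraint does not immediately tame is the component of $\frac{3}{2}\big(|u_x|^2u_x-|v_x|^2v_x\big)$ along $v_x$, that is $\frac{3}{2}\int_{S^1}(u_x+v_x,w_x)(w,v_x)$; here I would symmetrize by writing $v_x=\frac{1}{2}(u_x+v_x)-\frac{1}{2}w_x$, after which one part recombines into a perfect $x$-derivative that integrates to zero on $S^1$ and the rest reduces, after a single integration by parts, to expressions quadratic in $w$ with $C^2$-bounded coefficients. I expect the tedious part to be the bookkeeping of these lower-order terms, but the sole conceptual hurdle is the control of the second-order term, which works only because the solution is pinned to the sphere — the precise sense in which uniqueness relies on the geometric structure of~(\ref{e2}).
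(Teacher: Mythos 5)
Your argument is correct, and it takes a genuinely different (and leaner) route than the paper's. The paper also sets $w=u-v$ and telescopes the nonlinearity, but it runs Gronwall on $\norm{w}_{W^{1,2}}^2$: it must therefore compute $\dt\norm{w_x}_{L^2}^2$ as well, and the dangerous second-derivative terms $\int(u_x,w_{xx})(u,w_{xx})$ and $\int(w_x,v_{xx})(u,w_{xx})$ are tamed there by the constraint identity $(u,w_{xx})=-(u_x,w_x)-(w_x,v_x)-(w,v_{xx})$. You instead close the estimate at the $L^2$ level by pairing the normal part $3[(u_x,u_{xx})u-(v_x,v_{xx})v]$ directly against $w$ and invoking $(u,w)=\tfrac12|w|^2$, $(v,w)=-\tfrac12|w|^2$, which produces $\tfrac32\int[(u_x,u_{xx})+(v_x,v_{xx})]|w|^2$ with no derivative of $w$ at all; this is the same geometric mechanism (the normal component is slaved to the constraint $|u|=|v|=1$) but exploited one order lower, so the whole $\dt\norm{w_x}_{L^2}^2$ computation becomes unnecessary. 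Your treatment of the genuinely tangential troublesome term $\tfrac32\int(u_x+v_x,w_x)(v_x,w)$ by the symmetrization $v_x=\tfrac12(u_x+v_x)-\tfrac12 w_x$ checks out: the piece $(u_x+v_x,w_x)(u_x+v_x,w)$ is $\tfrac12\p_x[(u_x+v_x,w)^2]$ up to terms quadratic in $w$ with $C^2$-bounded coefficients, and the piece $(u_x+v_x,w_x)(w_x,w)=\tfrac12(u_x+v_x,w_x)\p_x|w|^2$ integrates by parts onto bounded coefficients using $\norm{w_{xx}}_{L^\infty}\le C$, which is available since $u,v\in L^\infty([0,T],W^{3,2})\hookrightarrow L^\infty([0,T],C^2)$. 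What your version buys is brevity and a weaker norm in the Gronwall argument; what the paper's version buys is essentially nothing extra here, though its $(u,w_{xx})$ identity is the tool one would need if the estimate had to be closed at the $W^{1,2}$ level (e.g.\ for a stability statement in $W^{1,2}$ rather than bare uniqueness). Both proofs rely on the sphere target in an essential and equivalent way.
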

\begin{proof}
Suppose $u$ and $v$ are two solutions of Cauchy
problem~(\ref{e:cauchy}) with same initial data $u_0$. Let $w=u-v$,
then $w$ satisfies the following equation:
\begin{equation}\label{u1}
\begin{split}
w_t =&~ w_{xxx} + 3[(u_x,u_{xx})u - (v_x, v_{xx})v] \\
        &\quad+  \frac 32 [|u_x|^2 u_x - |v_x|^2 v_x ]+ \frac 32 [(u,Au)u_x - (v,Av)v_x]\\
    =&: w_{xxx} + 3I_1 + \frac 32 I_2 +\frac 32 I_3.
\end{split}
\end{equation}
By inserting intermediate terms, we have
\begin{equation*}
\begin{split}
I_1 =&~ (u_x,u_{xx})u  - (u_x, v_{xx})u + (u_x,v_{xx})u  - (v_x, v_{xx})u + (v_x,v_{xx})u  - (v_x, v_{xx})v \\
    =&~ (u_x, w_{xx})u + (w_x, v_{xx})u + (v_x, v_{xx})w.
\end{split}
\end{equation*}
Similarly, for the last two terms in~(\ref{u1}), we have
\begin{equation*}
I_2 =~ (w_x, u_x)u_x + (w_x, v_x)u_x + (v_x, v_x)w_x,
\end{equation*}
and
\begin{equation*}
I_3 =~ (w, Au)u_x + (w, Av)u_x + (v, Av)w_x.
\end{equation*}

Now, we are going to compute $\dt \norm{w}_{W^{1,2}}$. First, it's
easy to see
\begin{equation}\label{u5}
\begin{split}
\frac 12 \dt \norm{w}_{L^2}^2
    =&~\int_{S^1}(w_t, w) \\
    =&~\int_{S^1}(w_{xxx}, w) + 3\int_{S^1}(u_x, w_{xx})(u, w)\\
     &~+3\int_{S^1}(w_x, v_{xx})(u, w) + 3\int_{S^1}(v_x, v_{xx})|w|^2\\
     &~+\frac 32 \int_{S^1}(w_x, u_x)(u_x,w) + (w_x, v_x)(u_x,w) + (v_x, v_x)(w_x,w)\\
     &~ + \frac 32 \int_{S^1}(w, Au)(u_x,w) + (w, Av)(u_x,w)+(v, Av)(w_x,w)\\
    \le&~ C(\norm{u}_{W^{3,2}} +\norm{v}_{W^{3,2}})\norm{w}_{W^{1,2}}^2.
\end{split}
\end{equation}
Next, we claim that
\begin{equation}\label{u6}
\begin{split}
&~-\frac 12 \dt \norm{w_x}_{L^2}^2\\
    =&~\int_{S^1}(w_t, w_{xx}) \\
    =&~\int_{S^1}(w_{xxx}, w_{xx}) + 3\int_{S^1}(u_x, w_{xx})(u, w_{xx})\\
     &~+3\int_{S^1}(w_x, v_{xx})(u, w_{xx}) + 3\int_{S^1}(v_x, v_{xx})(w,w_{xx})\\
     &~+\frac 32 \int_{S^1}(w_x, u_x)(u_x,w_{xx}) + (w_x, v_x)(u_x,w_{xx}) + (v_x, v_x)(w_x,w_{xx})\\
     &~ + \frac 32 \int_{S^1}(w, Au)(u_x,w_{xx}) + (w, Av)(u_x,w_{xx})+(v, Av)(w_x,w_{xx})\\
    \le&~ C(\norm{u}_{W^{3,2}} +\norm{v}_{W^{3,2}})\norm{w}_{W^{1,2}}^2.
\end{split}
\end{equation}
We shall examine this term by term carefully. First of all, it's
obvious
\[ \int_{S^1}(w_{xxx}, w_{xx}) =0. \]
Also, it's easy to check
\begin{equation*}
\begin{split}
3\int_{S^1}(v_x, v_{xx})(w,w_{xx}) + &\frac 32 \int_{S^1}(w, Au)(u_x,w_{xx}) + (w, Av)(u_x,w_{xx})\\
\le&~ C(\norm{u}_{W^{3,2}} +\norm{v}_{W^{3,2}})\norm{w}_{W^{1,2}}^2.
\end{split}
\end{equation*}
Furthermore, by integrating by parts, we have
\begin{equation*}
\begin{split}
&\frac 32 \int_{S^1}(w_x, u_x)(u_x,w_{xx}) + (v_x, v_x)(w_x,w_{xx}) + (v, Av)(w_x,w_{xx})\\
=&~-\frac 32 \int_{S^1}(w_x, u_x)(u_{xx},w_x) + (v_x, v_{xx})(w_x,w_x) + (v, Av_x)(w_x,w_x)\\
\le&~ C(\norm{u}_{W^{3,2}} +\norm{v}_{W^{3,2}})\norm{w_x}_{L^2}^2.
\end{split}
\end{equation*}
Similarly,
\begin{equation*}
\begin{split}
&~\frac 32 \int_{S^1}(w_x, v_x)(u_x,w_{xx})\\
=&~ \frac 32 \int_{S^1}(w_x, v_x)(w_x,w_{xx}) + (w_x, v_x)(v_x,w_{xx})\\
\le&~\frac 32
\norm{v_x}_{L^\infty}\norm{w_{xx}}_{L^\infty}\norm{w_x}_{L^2}^2
        -\frac 32 \int_{S^1}(w_x, v_x)(v_{xx},w_x)\\
\le&~ C(\norm{u}_{W^{3,2}} +\norm{v}_{W^{3,2}})\norm{w_x}_{L^2}^2.
\end{split}
\end{equation*}
Thus there are only two terms left, i.e.
\begin{equation}\label{u7}
3\int_{S^1}(u_x, w_{xx})(u, w_{xx}) \quad\mbox{and}\quad
3\int_{S^1}(w_x, v_{xx})(u, w_{xx}).
\end{equation}
To treat them, we observe that $|u|^2 =1$ implies $(u,u_x)=0$, hence
$(u,u_{xx}) + (u_x, u_x)=0$. Therefore,
\begin{equation*}
\begin{split}
(u,w_{xx}) =& (u, u_{xx} - v_{xx}) = -(u_x, u_x) - (u, v_{xx})\\
=&~-(u_x, u_x) + (u_x, v_x) - (u_x, v_x) + (v_x, v_x) + (v,v_{xx}) - (u,v_{xx})\\
=&~-(u_x, w_x) - (w_x, v_x) - (w,v_{xx}).
\end{split}
\end{equation*}
Taking this into account, we can bound~(\ref{u7}) in the same way as
above. Namely, we have
\[ 3\int_{S^1}(u_x, w_{xx})(u, w_{xx}) +3\int_{S^1}(w_x, v_{xx})(u, w_{xx})
\le~ C(\norm{u}_{W^{3,2}}
+\norm{v}_{W^{3,2}})\norm{w}_{W^{1,2}}^2.\] So we proved the claim
and finally get from (\ref{u5}) and (\ref{u6}) that
\[ \dt \norm{w}_{W^{1,2}}^2 \le C(\norm{u}_{W^{3,2}} +\norm{v}_{W^{3,2}})\norm{w}_{W^{1,2}}^2. \]
Thus Lemma~\ref{l2} implies
\[ \norm{w(t)}_{W^{1,2}}^2\le C\norm{w(0)}_{W^{1,2}}^2. \]
Since $u$ and $v$ share the same initial data, we know $w(0) = 0$.
Hence we conclude that $w(t) = 0$, i.e. the solution is unique.
\end{proof}

\section{Semi-conservation laws}
\label{Sec:4}

After getting a local solution $u\in
L^\infty([0,T),W^{k,2}(S^1,S^n))$ of Cauchy problem
~(\ref{e:cauchy}), what we need to do next is to derive
 an uniform estimate of $\norm{u}_{W^{3,2}}$ for all $t\in [0, T)$. Then Theorem~\ref{t1}
 ensures that the local solution can be extended to a global solution. In geometric evolution
problems, this is usually done by finding some energy conservation
laws, see~\cite{DW1,SW,WW} for example. However, in the current
situation we fail to find conservation quantities except the energy
$E_1(u)$. Nevertheless, we do find semi-conservation laws for two
higher order energies which is sufficient to prove the global
existence.

\begin{rem}
Our goal here is to bound the norm $\norm{u_x}_{H^{2,2}}$, which is
equivalent to the Sobolev norm $\norm{u}_{W^{3,2}}$, for all
existing time $t\in [0,T)$. However, there are some unexpected terms
emerging, when we calculate $\frac{d}{dt}\norm{u_x}_{H^{2,2}}$
directly. These `bad' terms can't be controlled by the linear form
of $\norm{u_x}_{H^{2,2}}$, which is necessary when carrying out
Gronwall's inequality. Luckily, we find some other energies which
satisfy semi-conservative laws. More importantly, these
semi-conservative laws implies a global bound of
$\norm{u}_{W^{3,2}}$.
\end{rem}

Through out this section, we let $u\in
L^\infty([0,T),W^{k,2}(S^1,S^n))$ be a local solution of Cauchy
problem ~(\ref{e:cauchy}) on the time interval $[0,T)$, and use $C$
to denote constants which may depend on the initial data, the matrix
$A$ and the maximal time $T$. First we define a second order
`energy'
\begin{equation*}
    E_{2}(u)
    =\int_{S^{1}}|\nabla_{x}u_{x}|^{2}
    -\frac{1}{4}\int_{S^{1}}|u_{x}|^4
    -\frac{9}{4}\int_{S^{1}}(u,Au)|u_{x}|^2
    +\int_{S^{1}}(u_{x},Au_{x}).
\end{equation*}
To derive the semi-conservation law, we take the time derivative
 \begin{equation}\label{eq:4.1}
    \frac d{dt}E_{2}(u)
    =\frac d{dt}\int_{S^{1}}|\nabla_{x}u_{x}|^{2}
    -\frac{1}{4}\frac d{dt}\int_{S^{1}}|u_{x}|^4
    -\frac{9}{4}\frac d{dt}\int_{S^{1}}(u,Au)|u_{x}|^2
    +\frac d{dt}\int_{S^{1}}(u_{x},Au_{x})
\end{equation}
and compute the four terms in~(\ref{eq:4.1}) one by one.

Using the equation~(\ref{e2}) and changing the order of derivatives,
we have
\begin{equation*}
    \begin{split}
        &\frac{d}{dt}\int_{S^{1}}|\nabla_{x}u_{x}|^2\\
        = &
        2\int_{S^{1}}\langle\nabla_{t}\nabla_{x}u_{x},\nabla_{x}u_{x}\rangle\\
        =&
        2\int_{S^{1}}\langle\nabla_{x}\nabla_{x}u_{t},\nabla_{x}u_{x}\rangle
        +2\int_{S^{1}}\langle R(u_{x},u_{t})u_{x},\nabla_{x}u_{x}\rangle\\
        = &
        2\int_{S^{1}}\langle u_{t},\nabla_{x}^{3}u_{x}\rangle
        +2\int_{S^{1}}\langle
        R(u_{x},\nabla_{x}^{2}u_{x})u_{x},\nabla_{x}u_{x}\rangle\\
          &
        +\int_{S^{1}}\langle
        R(u_{x},u_{x})u_{x},\nabla_{x}u_{x}\rangle|u_{x}|^2
        +3\int_{S^{1}}\langle R(u_{x},u_{x})u_{x},\nabla_{x}u_{x}\rangle(u,Au)\\
        = &
        2\int_{S^{1}}\langle u_{t},\nabla_{x}^{3}u_{x}\rangle
        +2\int_{S^{1}}\langle
        R(u_{x},\nabla_{x}^{2}u_{x})u_{x},\nabla_{x}u_{x}\rangle\\
        := &
        I_{1}+I_{2}\\
    \end{split}
\end{equation*}
Here we noticed that the curvature tensor $R$ on $S^{n}$ is
constant, hence $\nx R \equiv 0$. We first compute the term $I_1$.
\begin{equation*}
    \begin{split}
        I_{1}
        = &
        2\int_{S^{1}}\langle
        \nabla_{x}^{2}u_{x}
        +\frac{1}{2}\langle u_{x},u_{x}\rangle u_{x}
        +\frac{3}{2}(u,Au)
        u_{x},\nabla_{x}^{3}u_{x}\rangle\\
        = &
        2\int_{S^{1}}\langle
        \nabla_{x}^{2}u_{x},\nabla_{x}^{3}u_{x}\rangle
        +\int_{S^{1}}\langle u_{x},u_{x}\rangle\langle u_{x},\nabla_{x}^{3}u_{x}\rangle
        +3\int_{S^{1}}(u,Au)
        \langle u_{x},\nabla_{x}^{3}u_{x}\rangle\\
        = &
        -2\int_{S^{1}}\langle
        \nabla_{x}u_{x},u_{x}\rangle\langle u_{x},\nabla_{x}^{2}u_{x}\rangle
        -\int_{S^{1}}\langle u_{x},u_{x}\rangle\langle\nabla_{x}
        u_{x},\nabla_{x}^{2}u_{x}\rangle\\
          &
        -6\int_{S^{1}}(u_{x},Au)\langle
        u_{x},\nabla_{x}^{2}u_{x}\rangle
        -3\int_{S^{1}}(u,Au)\langle\nabla_{x}
        u_{x},\nabla_{x}^{2}u_{x}\rangle\\
        = &
        2\int_{S^{1}}\langle
        \nabla_{x}u_{x},u_{x}\rangle\langle\nabla_{x} u_{x},\nabla_{x}u_{x}\rangle
        +\int_{S^{1}}\langle\nabla_{x} u_{x},u_{x}\rangle\langle\nabla_{x}
        u_{x},\nabla_{x}u_{x}\rangle\\
          &
        +6\int_{S^{1}}(u_{xx},Au)\langle
        u_{x},\nabla_{x}u_{x}\rangle
        +6\int_{S^{1}}(u_{x},Au_{x})\langle
        u_{x},\nabla_{x}u_{x}\rangle\\
          &
        +6\int_{S^{1}}(u_{x},Au)\langle
        \nabla_{x}u_{x},\nabla_{x}u_{x}\rangle
        +3\int_{S^{1}}(u_{x},Au)\langle\nabla_{x}
        u_{x},\nabla_{x}u_{x}\rangle\\
        = &
        3\int_{S^{1}}\langle
        \nabla_{x}u_{x},u_{x}\rangle\langle\nabla_{x} u_{x},\nabla_{x}u_{x}\rangle
        +6\int_{S^{1}}(u_{xx},Au)\langle
        u_{x},\nabla_{x}u_{x}\rangle\\
          &
        +6\int_{S^{1}}(u_{x},Au_{x})\langle
        u_{x},\nabla_{x}u_{x}\rangle
        +9\int_{S^{1}}(u_{x},Au)\langle\nabla_{x}
        u_{x},\nabla_{x}u_{x}\rangle
    \end{split}
\end{equation*}
For $I_{2}$, since $\nx R \equiv 0$, we have
\begin{equation*}
    \begin{split}
        I_{2}
        = &
        2\int_{S^{1}}\langle
        R(u_{x},\nabla_{x}^{2}u_{x})u_{x},\nabla_{x}u_{x}\rangle\\
        = &
        \int_{S^{1}}\langle
        R(u_{x},\nabla_{x}^{2}u_{x})u_{x},\nabla_{x}u_{x}\rangle
        -\int_{S^{1}}\langle
        R(\nabla_{x}u_{x},\nabla_{x}u_{x})u_{x},\nabla_{x}u_{x}\rangle\\
          &
        -\int_{S^{1}}\langle
        (\nabla_{x}R)(u_{x},\nabla_{x}u_{x})u_{x},\nabla_{x}u_{x}\rangle
        -\int_{S^{1}}\langle
        R(u_{x},\nabla_{x}u_{x})\nabla_{x}u_{x},\nabla_{x}u_{x}\rangle\\
          &
        -\int_{S^{1}}\langle
        R(u_{x},\nabla_{x}u_{x})u_{x},\nabla_{x}^{2}u_{x}\rangle\\
         = &0.
    \end{split}
\end{equation*}
Thus for the first term in~(\ref{eq:4.1}), we get
\begin{equation}\label{eq:4.2}
    \begin{split}
        &\frac{d}{dt}\int_{S^{1}}|\nabla_{x}u_{x}|^2\\
        = &
        3\int_{S^{1}}\langle
        \nabla_{x}u_{x},u_{x}\rangle\langle\nabla_{x} u_{x},\nabla_{x}u_{x}\rangle
        +6\int_{S^{1}}(u_{xx},Au)\langle
        u_{x},\nabla_{x}u_{x}\rangle\\
          &
        +6\int_{S^{1}}(u_{x},Au_{x})\langle
        u_{x},\nabla_{x}u_{x}\rangle
        +9\int_{S^{1}}(u_{x},Au)\langle\nabla_{x}
        u_{x},\nabla_{x}u_{x}\rangle.
    \end{split}
\end{equation}
On the other hand,
\begin{equation}\label{eq:4.3}
    \begin{split}
        &\frac{d}{dt}\int_{S^{1}}|u_{x}|^4\\
        = &
        4\int_{S^{1}}\langle\nabla_{t} u_{x}, u_{x}\rangle\langle u_{x}, u_{x}\rangle\\
        = &
        -4\int_{S^{1}}\langle u_{t},\nabla_{x}u_{x}\rangle\langle u_{x}, u_{x}\rangle
        -8\int_{S^{1}}\langle u_{t},u_{x}\rangle\langle u_{x},
        \nabla_{x}u_{x}\rangle\\
        = &
        -4\int_{S^{1}}\langle\nabla_{x}^{2} u_{x}, \nabla_{x}u_{x}\rangle\langle u_{x}, u_{x}\rangle
        -2\int_{S^{1}}\langle u_{x}, u_{x}\rangle\langle u_{x}, u_{x}\rangle\langle u_{x},
        \nabla_{x}u_{x}\rangle\\
          &
        -6\int_{S^{1}}(u,Au)\langle u_{x},\nabla_{x}u_{x}\rangle\langle u_{x}, u_{x}\rangle
        -8\int_{S^{1}}\langle\nabla_{x}^{2} u_{x}, u_{x}\rangle
        \langle u_{x},\nabla_{x}u_{x}\rangle\\
          &
        -4\int_{S^{1}}\langle u_{x}, u_{x}\rangle\langle u_{x}, u_{x}\rangle
        \langle u_{x},\nabla_{x}u_{x}\rangle
        -12\int_{S^{1}}(u,Au)\langle u_{x}, u_{x}\rangle
        \langle u_{x},\nabla_{x}u_{x}\rangle\\
        = &
        4\int_{S^{1}}\langle\nabla_{x} u_{x}, \nabla_{x}u_{x}\rangle\langle u_{x}, \nabla_{x}u_{x}\rangle
        -6\int_{S^{1}}(u,Au)\langle u_{x},\nabla_{x}u_{x}\rangle\langle u_{x}, u_{x}\rangle\\
          &
        +8\int_{S^{1}}\langle\nabla_{x} u_{x}, u_{x}\rangle
        \langle\nabla_{x} u_{x},\nabla_{x}u_{x}\rangle
        -12\int_{S^{1}}(u,Au)\langle u_{x}, u_{x}\rangle
        \langle u_{x},\nabla_{x}u_{x}\rangle\\
        = &
        12\int_{S^{1}}\langle\nabla_{x} u_{x}, \nabla_{x}u_{x}\rangle\langle u_{x}, \nabla_{x}u_{x}\rangle
        +9\int_{S^{1}}(u_{x},Au)\langle u_{x},u_{x}\rangle\langle u_{x},
        u_{x}\rangle.
    \end{split}
\end{equation}
Besides,
\begin{equation}\label{eq:4.4}
    \begin{split}
        \lefteqn{\frac{d}{dt}\int_{S^{1}}(u,Au)\langle u_{x},
        u_{x}\rangle}\\
        = &
        2\int_{S^{1}}(u_{t},Au)\langle u_{x}, u_{x}\rangle
        +2\int_{S^{1}}(u,Au)\langle\nabla_{t} u_{x},
        u_{x}\rangle\\
        = &
        2\int_{S^{1}}(u_{t},Au)\langle u_{x}, u_{x}\rangle
        -2\int_{S^{1}}(u,Au)\langle u_{t},
        \nabla_{x}u_{x}\rangle
        -4\int_{S^{1}}(u_{x},Au)\langle u_{t},
        u_{x}\rangle\\
        = &
        2\int_{S^{1}}(\nabla_{x}^{2}u_{x}, Au)\langle u_{x},
        u_{x}\rangle
        +\int_{S^{1}}(u_{x},Au)\langle u_{x},
        u_{x}\rangle^2\\
          &
        +3\int_{S^{1}}(u,Au)(u_{x},Au)\langle u_{x},
        u_{x}\rangle
        -2\int_{S^{1}}(u,Au)\langle \nabla_{x}^{2}u_{x},
        \nabla_{x}u_{x}\rangle\\
          &
        -\int_{S^{1}}(u,Au)\langle u_{x}, u_{x}\rangle\langle u_{x},
        \nabla_{x}u_{x}\rangle
        -3\int_{S^{1}}(u,Au)^2\langle u_{x},
        \nabla_{x}u_{x}\rangle\\
          &
        -4\int_{S^{1}}(u_{x},Au)\langle \nabla_{x}^{2}u_{x},
        u_{x}\rangle
        -2\int_{S^{1}}(u_{x},Au)\langle u_{x},
        u_{x}\rangle^2\\
          &
        -6\int_{S^{1}}(u_{x},Au)(u,Au)\langle u_{x},
        u_{x}\rangle\\
        = &
        2\int_{S^{1}}(\nabla_{x}^{2}u_{x}, Au)\langle u_{x},
        u_{x}\rangle
        -\frac 12\int_{S^{1}}(u_{x},Au)\langle u_{x},
        u_{x}\rangle^2\\
          &
        +3\int_{S^{1}}(u,Au)(u_{x},Au)\langle u_{x},
        u_{x}\rangle
        +4\int_{S^{1}}(u_x,Au)\langle \nabla_{x}u_{x},
        \nabla_{x}u_{x}\rangle\\
          &
        +4\int_{S^{1}}(u_{xx},Au)\langle \nabla_{x}u_{x},
        u_{x}\rangle
        +4\int_{S^{1}}(u_{x},Au_x)\langle \nx u_{x},
        u_{x}\rangle\\
    \end{split}
\end{equation}
To proceed, we recall that
$$\nabla_{x}^{2}u_{x}=u_{xxx}+3(u_{x},u_{xx}) u+\langle
u_{x},u_{x}\rangle u_{x}.$$ So for the first term in the last
equality of (\ref{eq:4.4}), we have
\begin{equation}\label{eq:4.5}
    \begin{split}
        &2\int_{S^{1}}(\nabla_{x}^{2}u_{x}, Au)\langle u_{x},
        u_{x}\rangle\\
        =&
        2\int_{S^{1}}(u_{xxx}, Au)\lg u_x, u_x \rg+6\int_{S^{1}}\lg u_x, u_{xx} \rg(u, Au)\lg u_x, u_x \rg \\
        &  +2\int_{S^{1}}\lg u_x, u_x \rg^2(u_x, Ax)\\
        =&
        2\int_{S^{1}}(u_{x},Au_x)\langle \nx u_{x},
        u_{x}\rangle-4\int_{S^{1}}(u_{xx},Au)\langle \nabla_{x}u_{x},
        u_{x}\rangle\\
        &-\int_{S^{1}}(u_{x},Au)\langle u_{x},
        u_{x}\rangle^2
    \end{split}
\end{equation}
(\ref{eq:4.4}) and (\ref{eq:4.5}) yields
\begin{equation}\label{eq:4.6}
    \begin{split}
        \lefteqn{\frac{d}{dt}\int_{S^{1}}(u,Au)\langle u_{x},
        u_{x}\rangle}\\
        =&
        4\int_{S^{1}}(u_x,Au)\langle \nabla_{x}u_{x},
        \nabla_{x}u_{x}\rangle+6\int_{S^{1}}(u_{x},Au_x)\langle \nx u_{x},
        u_{x}\rangle\\
        &
        +3\int_{S^{1}}(u,Au)(u_{x},Au)\langle u_{x},
        u_{x}\rangle-\frac 32\int_{S^{1}}(u_{x},Au)\langle u_{x},
        u_{x}\rangle^2
        \\
    \end{split}
\end{equation}
For the last term in~(\ref{eq:4.1}), we have
\begin{equation}\label{eq:4.7}
    \begin{split}
        \lefteqn{\frac{d}{dt}\int_{S^{1}}(u_{x},
        Au_{x})}\\
        = &
        2\int_{S^{1}}(u_{xt}, Au_{x})
        =-2\int_{S^{1}}(u_{t}, Au_{xx})\\
        = &
        -2\int_{S^{1}}\langle \nabla_{x}^{2}u_{x}
        +\frac{1}{2}\langle u_{x}, u_{x}\rangle u_{x}
        +\frac{3}{2}(u,Au) u_{x}, Au_{xx}\rangle\\
        = &
        -2\int_{S^{1}}(\nabla_{x}^{2}u_{x}, Au_{xx})
        -\int_{S^{1}}\langle u_{x}, u_{x}\rangle(u_{x},Au_{xx})
        -3\int_{S^{1}}(u,Au)(u_{x},Au_{xx})\\
        = &
        -2\int_{S^{1}}(u_{xxx}, Au_{xx})
        -6\int_{S^{1}}(u, Au_{xx})\langle u_{x}, \nabla_{x}u_{x}\rangle
        -2\int_{S^{1}}(u_{x},Au_{xx})\langle u_{x}, u_{x}\rangle\\
          &
        -\int_{S^{1}}\langle u_{x}, u_{x}\rangle(u_{x},
        Au_{xx})
        +3\int_{S^{1}}(u_{x},Au)(u_{x},Au_{x})\\
        = &
        -6\int_{S^{1}}(u, Au_{xx})\langle u_{x}, \nabla_{x}u_{x}\rangle
        +3\int_{S^{1}}(u_{x},Au_{x})\langle u_{x},
        \nabla_{x}u_{x}\rangle\\
          &
        +3\int_{S^{1}}(u_{x},Au)(u_{x},Au_{x})
    \end{split}
\end{equation}
Here we notice that $\nabla_{x}u_{x}=u_{xx}+\langle
u_{x},u_{x}\rangle u$ and $\nabla_{x}^{2}u_{x}=u_{xxx}+3(u_{x},u_{xx})u+\langle u_{x},u_{x}\rangle u_{x}$.\\
Combining (\ref{eq:4.2}),(\ref{eq:4.3}),(\ref{eq:4.6})
and(\ref{eq:4.7}), we finally get
\begin{equation}\label{eq:4.8}
    \begin{split}
        \frac{d}{dt}E_2(u)=&
        \frac{9}{4}\int_{S^{1}}(u_{x},Au)|u_{x}|^{4}
        +3\int_{S^{1}}(u_{x},Au)(u_{x},
        Au_{x})\\
        &-\frac 92\int_{S^{1}}(u_{x},Au_x)\langle \nx u_{x}, u_x \rangle
        -\frac{27}{4}\int_{S^{1}}(u,Au)(u_{x},Au)\langle u_{x},
        u_{x}\rangle.
    \end{split}
\end{equation}
Now we can derive the desired Gronwall-type inequality. Since $A$ is
a constant matrix and $|u|=1$, (\ref{eq:4.8}) yields
\begin{equation}\label{eq:4.9}
    \begin{split}
        \frac{d}{dt}E_2(u)&\le C\int_{S^{1}}|u_{x}|^{5}+C\int_{S^{1}}|u_{x}|^{3}+ C\int_{S^{1}}|u_{x}|^{3}|\nx u_x|\\
        &\le C\int_{S^{1}}|u_{x}|^{5}+C\int_{S^{1}}|u_{x}|^{3}|\nx u_x|.
            \end{split}
\end{equation}

At this point, we may recall that Lemma~\ref{l3} provides the
desired bounds for both $\norm{u_x}_{L^2}$ and
$\norm{u_x}_{L^\infty}$. Therefore,
\begin{equation}\label{eq:4.10}
    \begin{split}
        \int_{S^{1}}|u_{x}|^{5}\le& \norm{u_x}_{L^\infty}^3\int_{S^{1}}|u_{x}|^{2}\\
        \le& C\norm{u_x}_{H^{1,2}}^{3/2}\norm{u_x}_{L^2}^2\\
        \le& C\int_{S^{1}}|\nabla_{x}u_{x}|^{2}+C.
    \end{split}
\end{equation}
Furthermore,
\begin{equation}\label{eq:4.11}
    \begin{split}
        \int_{S^{1}}|u_{x}|^{3}|\nx u_x|\le& \norm{u_x}_{L^\infty}^2\int_{S^{1}}|u_{x}||\nx u_x|\\
        \le& C\norm{u_x}_{H^{1,2}}\cdot\norm{u_x}_{L^2}\norm{\nx u_x}_{L^2} \\
        \le& C\int_{S^{1}}|\nabla_{x}u_{x}|^{2}+C.
    \end{split}
\end{equation}
So we arrive at
\begin{equation}\label{ee7}
    \frac{d}{dt}E_2(u)\le C\int_{S^{1}}|\nabla_{x}u_{x}|^{2}+C.
\end{equation}

Next, we claim that the integral $\int_{S^{1}}|\nabla_{x}u_{x}|^{2}$
is controlled by $E_2(u)$. Indeed,
\begin{equation}\label{ee2}
    \begin{split}
        \int_{S^{1}}|\nabla_{x}u_{x}|^{2}
        =&
        E_{2}(u)
        +\frac{1}{4}\int_{S^{1}}|u_{x}|^{4}
        +\frac{9}{4}\int_{S^{1}}(u,Au)|u_{x}|^{2}
        -\int_{S^{1}}(u_{x},Au_{x})\\
        \leq&
        E_2(u) + C\int_{S^{1}}|u_{x}|^{4}+C.
    \end{split}
\end{equation}
By the same virtual of the estimate~(\ref{eq:4.10}), we have
\begin{equation}\label{ee3}
    \begin{split}
        \int_{S^{1}}|u_{x}|^{4}\le& \norm{u_x}_{L^\infty}^2\int_{S^{1}}|u_{x}|^{2}\\
        \le& C\norm{u_x}_{H^{1,2}}\norm{u_x}_{L^2}^2\\
        \le& C(\epsilon\norm{u_x}_{H^{1,2}}^{2}+\frac 1\epsilon\norm{u_x}_{L^2}^4)\\
        \le& C\epsilon\int_{S^{1}}|\nabla_{x}u_{x}|^{2} + C.
    \end{split}
\end{equation}
Here we employed Young's inequality with $\epsilon$,i.e.
\[ ab\le \frac {\epsilon a^2}{2} + \frac {b^2}{2\epsilon},\text{~~for~} a,b>0. \]
Thus if we choose $\epsilon$ sufficiently small, we proved the claim
from (\ref{ee2}) and (\ref{ee3}) that
\begin{equation}\label{ee4}
\int_{S^{1}}|\nabla_{x}u_{x}|^{2}\le CE_2(u) +C.
\end{equation}
Consequently, we conclude from (\ref{ee7}) and(\ref{ee4}) that
\begin{equation}\label{ee5}
\frac d{dt}E_2(u) \le CE_2(u) + C.
\end{equation}
By Gronwall's inequality, we finally arrive at
\begin{lem}\label{l4}
Suppose $u:S^1\times[0,T)\to S^n$ is a solution to the cauchy
problem~(\ref{e:cauchy}), then  for all $t\in [0,T)$
$$E_2(u(t)) \le C(T) , \int_{S^{1}}|\nabla_{x}u_{x}(t)|^{2}dx\le C(T),$$
where $C(T)$ is a constant depending on $T$ and the initial data
$u_0$.
\end{lem}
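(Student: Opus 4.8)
The plan is to obtain both assertions of Lemma~\ref{l4} directly from the Gronwall-type inequality~(\ref{ee5}), which the preceding computation has already established. Indeed, all the analytic effort has been spent above: the term-by-term evaluation of $\frac{d}{dt}E_2(u)$ culminating in~(\ref{eq:4.8}), together with the interpolation bounds supplied by Lemma~\ref{l3} that give~(\ref{ee7}) and~(\ref{ee4}), reduces everything to the single scalar differential inequality $\frac{d}{dt}E_2(u)\le CE_2(u)+C$. What remains is merely to integrate it and then convert the bound on $E_2$ into a bound on the Dirichlet-type integral.

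First I would set $g(t)=E_2(u(t))+1$, so that~(\ref{ee5}) becomes $g'(t)\le Cg(t)$. Gronwall's inequality then yields $g(t)\le g(0)e^{Ct}$ for every $t\in[0,T)$, that is,
\[ E_2(u(t)) \le \bigl(E_2(u_0)+1\bigr)e^{CT}-1 =: C(T), \qquad t\in[0,T). \]
Since the constant $C$ in~(\ref{ee5}) depends only on the initial data (through the conserved quantity $E_1$ exploited in Lemma~\ref{l3}), the matrix $A$, and the maximal time $T$, the resulting bound $C(T)$ depends only on $T$ and $u_0$, as claimed. Second, with this bound in hand the estimate on $\int_{S^1}|\nabla_x u_x|^2$ is immediate from~(\ref{ee4}): for $t\in[0,T)$,
\[ \int_{S^1}|\nabla_x u_x(t)|^2 \le CE_2(u(t))+C \le C\,C(T)+C, \]
which is again a constant depending only on $T$ and $u_0$. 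This settles both statements.

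The one point that genuinely deserves care — and which I regard as the main obstacle rather than the Gronwall step itself — is the justification that the formal differentiation leading to~(\ref{ee5}) is legitimate for the local solution $u\in L^\infty([0,T),W^{k,2})$, which is a priori not smooth. I would dispose of this by the same approximation already used in the proof of Theorem~\ref{t3}: choose smooth initial data $u_0^i\to u_0$ in $W^{k,2}$, run the argument for the corresponding smooth solutions $u^i$, apply Gronwall to each to get $E_2(u^i(t))\le C(T)$ with a uniform $C(T)$, and then pass to the limit using the uniform bounds and weak-$*$ convergence. Here one uses that $E_2$ is continuous with respect to the $W^{2,2}$-norm, so that $E_2(u_0^i)\to E_2(u_0)$.

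Finally, one should check at the outset that $E_2(u_0)$ is finite, since otherwise the bound is vacuous. This follows because each integrand in $E_2$ is a polynomial in $u_x$ and $\nabla_x u_x=u_{xx}+\langle u_x,u_x\rangle u$; hence $E_2(u_0)$ is controlled by $\norm{u_0}_{W^{2,2}}$ (via the interpolation $\norm{u_x}_{L^\infty}\le C\norm{u_x}_{H^{1,2}}^{1/2}\norm{u_x}_{L^2}^{1/2}$ for the quartic terms) and is therefore finite for any $u_0\in W^{3,2}(S^1,S^n)$, with the geometric and classical norms interchangeable by Theorem~\ref{t:equivalence}.
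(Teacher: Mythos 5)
Your proposal is correct and follows exactly the route the paper takes: the displayed inequality~(\ref{ee5}) is integrated by Gronwall to bound $E_2(u(t))$, and the bound on $\int_{S^1}|\nabla_x u_x|^2$ is then read off from~(\ref{ee4}); the paper states Lemma~\ref{l4} as the immediate conclusion of that computation. Your additional remarks on justifying the formal differentiation by smooth approximation and on the finiteness of $E_2(u_0)$ are sensible points of rigor that the paper leaves implicit, but they do not change the argument.
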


Our last ingredient in proving the global existence is the
semi-conservation law for a third order energy, which is given by
\begin{equation}\label{eq:4.12}
    E_{3}(u)
    =\int_{S^{1}}|\nabla_{x}^{2} u_{x}|^{2}
    -\int_{S^{1}}\langle u_{x},\nabla_{x}u_{x}\rangle^{2}
    -\frac{3}{2}\int_{S^{1}}|u_{x}|^{2}|\nabla_{x}u_{x}|^{2}.
\end{equation}
Similarly,  we have the following lemma.
\begin{lem}\label{l5}
Suppose $u:S^1\times[0,T)\to S^n$ is a solution to the cauchy
problem~(\ref{e:cauchy}), then  for all $t\in [0,T)$
$$E_3(u(t)) \le C(T) , \int_{S^{1}}|\nabla_{x}^2u_{x}(t)|^{2}dx\le C(T),$$
where $C(t)$ is a constant depending on $T$ and the initial data
$u_0$.
\end{lem}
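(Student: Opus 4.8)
The plan is to mirror the proof of Lemma~\ref{l4} exactly, now applied to the third-order energy $E_3(u)$. That is, I would differentiate $E_3(u)$ in time, use the flow equation~(\ref{e2}) together with the commutation formula $\nabla_t \nx = \nx \nabla_t + R(u_x, u_t)$ and the constancy of the curvature tensor ($\nx R \equiv 0$) on $S^n$, integrate by parts repeatedly, and aim to show that after massive cancellation of the leading terms the time derivative satisfies a Gronwall-type inequality $\frac{d}{dt}E_3(u) \le C\,\big(E_3(u) + 1\big)$. The whole point of the particular choice of lower-order correction terms $-\int \langle u_x, \nx u_x\rangle^2$ and $-\frac32 \int |u_x|^2 |\nx u_x|^2$ in the definition~(\ref{eq:4.12}) is, precisely as in $E_2$, to arrange that the worst (highest-derivative) contributions coming from $\frac{d}{dt}\int |\nx^2 u_x|^2$ are exactly matched and killed by the derivatives of these correction terms, leaving behind only terms that are at most cubic or quartic in the field and involve at most $\nx^2 u_x$.

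Concretely, I would first compute $\frac{d}{dt}\int_{S^1} |\nx^2 u_x|^2 = 2\int_{S^1}\langle \nabla_t \nx^2 u_x, \nx^2 u_x\rangle$, push the time derivative through the two spatial covariant derivatives (picking up two curvature commutator terms, each evaluated using $\nx R \equiv 0$), and then substitute $u_t = \nx^2 u_x + \frac12 |u_x|^2 u_x + \frac32 (u, Au) u_x$ from~(\ref{e2}). The genuinely dangerous term is $2\int \langle \nx^4 u_x, \nx^2 u_x\rangle$, which vanishes by integration by parts, but the next-order terms produced by differentiating $\frac12 |u_x|^2 u_x$ and $\frac32(u,Au)u_x$ four times against $\nx^2 u_x$ will contain factors like $\langle u_x, \nx u_x\rangle |\nx^2 u_x|^2$ and $|u_x|^2 \langle \nx u_x, \nx^3 u_x\rangle$ that cannot be absorbed into $E_3$ alone. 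These are exactly cancelled against $-\frac{d}{dt}\int \langle u_x, \nx u_x\rangle^2$ and $-\frac32 \frac{d}{dt}\int |u_x|^2 |\nx u_x|^2$; I expect this bookkeeping to be the main obstacle, since it requires carefully tracking the numerical coefficients so that the non-integrable (in the Gronwall sense) pieces disappear, just as the term-by-term cancellations in~(\ref{eq:4.4})--(\ref{eq:4.8}) delivered the clean right-hand side~(\ref{eq:4.8}) for $E_2$.

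After the cancellation, the surviving terms will be polynomial expressions in $u_x$, $\nx u_x$ and $\nx^2 u_x$ together with the bounded factors $(u, Au)$ and $Au_x$. I would bound each such term by interpolation: Lemma~\ref{l3} already controls $\norm{u_x}_{L^\infty}$ and $\norm{u_x}_{L^2}$, Lemma~\ref{l4} already gives a $t$-dependent bound on $\int_{S^1}|\nx u_x|^2$ (hence on $\norm{u_x}_{H^{1,2}}$ and, via~(\ref{equ:int}), on $\norm{\nx u_x}_{L^\infty}$). Using the interpolation inequality~(\ref{equ:int}) and Young's inequality with a small parameter $\epsilon$ in the style of~(\ref{eq:4.10})--(\ref{eq:4.11}), every remaining term will be dominated by $C\int_{S^1}|\nx^2 u_x|^2 + C$. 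Finally, exactly as in~(\ref{ee2})--(\ref{ee4}), I would establish the reverse control $\int_{S^1}|\nx^2 u_x|^2 \le C E_3(u) + C$ by moving the correction terms to the other side and bounding $\int \langle u_x, \nx u_x\rangle^2$ and $\int |u_x|^2 |\nx u_x|^2$ via the now-available bounds on $\norm{u_x}_{L^\infty}$ and $\int |\nx u_x|^2$. Combining these two estimates yields $\frac{d}{dt}E_3(u) \le C E_3(u) + C$, and Gronwall's inequality on $[0,T)$ gives the stated bounds on $E_3(u(t))$ and $\int_{S^1}|\nx^2 u_x(t)|^2$, completing the proof of Lemma~\ref{l5}.
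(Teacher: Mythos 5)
Your proposal is correct and follows essentially the same route as the paper: differentiate the three constituents of $E_3$ using the flow equation, the commutator with curvature and $\nx R\equiv 0$, verify that the highest-order terms (of the type $\int\langle u_x,\nx^2u_x\rangle\langle\nx u_x,\nx^2u_x\rangle$ and $\int\langle u_x,\nx u_x\rangle|\nx^2u_x|^2$) cancel in the combination defining $E_3$, bound the remainder via Corollary~\ref{c1} together with the bounds from Lemmas~\ref{l3} and~\ref{l4}, establish $\int|\nx^2u_x|^2\le CE_3+C$, and close with Gronwall. The only quibble is a miscount of derivatives in the leading term (it is $2\int\langle\nx^5u_x,\nx^2u_x\rangle$, which still vanishes after integration by parts), which does not affect the argument.
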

It takes a lot of efforts to find the energy functional $E_3$ which
satisfies the semi-conservation law and therefore bounded under the
flow. The spirit is all the same as that of $E_{2}$ demonstrated in
the proof of Lemma~\ref{l4}. So we are going to omit the lengthy
computation which mainly involves integration by parts and changing
orders of derivatives, and only give the key steps instead.

The time-derivative of $E_3(u)$ consists of four terms. A long
computation yields
\begin{eqnarray}\label{eq:4.17}
    \begin{split}
        &\frac{d}{dt}\int_{S^{1}}|\nabla_{x}^{2}
        u_{x}|^2\\
        = &
        2\int_{S^{1}}\langle\nabla_{x}^{3}u_{t},\nabla_{x}^{2}u_{x}\rangle
        +2\int_{S^{1}}\langle
        \nabla_{x}(R(u_{t},u_{x})u_{x}),\nabla_{x}^{2}u_{x}\rangle\\
          &
        +2\int_{S^{1}}\langle R(u_{t},u_{x})\nabla_{x}u_{x},\nabla_{x}^{2}u_{x}\rangle\\
        = &
        6\int_{S^{1}}\langle u_{x},\nabla_{x}^{2}u_{x}\rangle
        \langle\nabla_{x} u_{x}, \nabla_{x}^{2}u_{x}\rangle
        +9\int_{S^{1}}\langle u_{x}, \nabla_{x}u_{x}\rangle|\nabla_{x}^{2}
        u_{x}|^2\\
          &
        +6\int_{S^{1}}(u_{xxx},Au)\langle
        u_{x},\nabla_{x}^{2}u_{x}\rangle
        +15\int_{S^{1}}(u_{x},Au)|\nabla_{x}^{2}u_{x}|^2\\
          &
        +18\int_{S^{1}}(u_{xx},Au)\langle
        \nabla_{x}u_{x},\nabla_{x}^{2}u_{x}\rangle
        +18\int_{S^{1}}(u_{x},Au_{x})\langle
        \nabla_{x}u_{x},\nabla_{x}^{2}u_{x}\rangle\\
          &
        +18\int_{S^{1}}(u_{xx},Au_{x})\langle
        u_{x},\nabla_{x}^{2}u_{x}\rangle.
    \end{split}
\end{eqnarray}
For the second term, we have
\begin{equation}\label{eq:4.18}
    \begin{split}
        &\frac{d}{dt}\int_{S^{1}}\langle
        u_{x},\nabla_{x}u_{x}\rangle^2\\
        = &
        2\int_{S^{1}}\langle\nabla_{x} u_{t},\nabla_{x}u_{x}\rangle\langle
        u_{x},\nabla_{x}u_{x}\rangle
        +2\int_{S^{1}}\langle u_{x},\nabla_{x}\nabla_{t}u_{x}\rangle\langle
        u_{x},\nabla_{x}u_{x}\rangle\\   &
        +2\int_{S^{1}}\langle u_{x},R(u_{t},u_{x})u_{x}\rangle\langle
        u_{x},\nabla_{x}u_{x}\rangle\\
        = &
        6\int_{S^{1}}\langle u_{x},\nabla_{x}^{2}u_{x}\rangle\langle
        \nabla_{x}u_{x},\nabla_{x}^{2}u_{x}\rangle
        +3\int_{S^{1}}\langle u_{x}, \nabla_{x}u_{x}\rangle^3\\
          &
        +15\int_{S^{1}}( u_{x}, Au)\langle u_{x},
        \nabla_{x}u_{x}\rangle^2
        +6\int_{S^{1}}| u_{x}|^2( u_{xx}, Au)\langle
        u_{x},\nabla_{x}u_{x}\rangle\\
          &
        +6\int_{S^{1}}| u_{x}|^2( u_{x}, Au_{x})\langle
        u_{x},\nabla_{x}u_{x}\rangle\\
    \end{split}
\end{equation}
Besides, for the third term,
\begin{equation}\label{eq:4.19}
    \begin{split}
        &\frac{d}{dt}\int_{S^{1}}| u_{x}|^2| \nabla_{x}u_{x}|^2\\
        = &
        2\int_{S^{1}}\langle\nabla_{x} u_{t},u_{x}\rangle| \nabla_{x}u_{x}|^2
        +2\int_{S^{1}}| u_{x}|^2\langle
        \nabla_{x}\nabla_{x}u_{t},\nabla_{x}u_{x}\rangle\\
          &
        +2\int_{S^{1}}| u_{x}|^2\langle
        R(u_{t},u_{x})u_{x},\nabla_{x}u_{x}\rangle\\
         = &
        6\int_{S^{1}}\langle u_{x},\nabla_{x}u_{x}\rangle|\nabla_{x}^{2}u_{x}|^2
        +15\int_{S^{1}}( u_{x}, Au)| u_{x}|^2| \nabla_{x}u_{x}|^2\\
          &
        +3\int_{S^{1}}| u_{x}|^2\langle
        u_{x},\nabla_{x}u_{x}\rangle| \nabla_{x}u_{x}|^2
        +6\int_{S^{1}}| u_{x}|^2( u_{xx}, Au)\langle
        u_{x},\nabla_{x}u_{x}\rangle\\
          &
        +6\int_{S^{1}}| u_{x}|^2( u_{x}, Au_{x})\langle
        u_{x},\nabla_{x}u_{x}\rangle\\
    \end{split}
\end{equation}
Putting these three terms together, we get
\begin{equation}\label{eq:4.21}
    \begin{split}
        \frac{d}{dt}E_3(u)        = &
        6\int_{S^{1}}(u_{xxx},Au)\langle
        u_{x},\nabla_{x}^{2}u_{x}\rangle
        +15\int_{S^{1}}(u_{x},Au)\langle
        \nabla_{x}^{2}u_{x},\nabla_{x}^{2}u_{x}\rangle\\
          &
        -3\int_{S^{1}}\langle u_{x}, \nabla_{x}u_{x}\rangle^3 + \{\text{lower order terms}\}.\\
    \end{split}
\end{equation}
The key point in the above is that all the `bad' terms, i.e. the
higher order terms including $$\int_{S^{1}}\langle
        u_{x},\nabla_{x}^{2}u_{x}\rangle\langle\nabla_{x} u_{x},
        \nabla_{x}^{2}u_{x}\rangle
        ,\int_{S^{1}}\langle u_{x}, \nabla_{x}u_{x}\rangle|\nabla_{x}^{2}
        u_{x}|^2$$ vanish in the
summation. Because we have already shown that $\norm{u_x}_{H^{1,2}}$
and $\norm{u_x}_{L^\infty}$ are bounded by Lemma~\ref{l3} and
Lemma~\ref{l4}. The other terms left, though seems a lot, are all
controllable. Here we only take the term $\int_{S^{1}}\langle u_{x},
\nabla_{x}u_{x}\rangle^3$ for example to demonstrate this. Similar
to the estimate of $E_2$, the interpolation inequality--namely,
Corollary~\ref{c1}--plays an important role here.
\begin{equation}\label{eq:4.22}
    \begin{split}
        \int_{S^{1}}\langle u_{x}, \nabla_{x}u_{x}\rangle^3 &\le
        C\norm{u_x}_{L^\infty}^3\norm{\nx u_x}_{L^\infty}^3\\
        &\leq
        C\norm{u_x}_{H^{1,2}}^{\frac 32}\cdot\norm{\nx u_x}_{H^{1,2}}^{\frac
        32}\norm{\nx u_x}_{L^2}^{\frac 32}\\
        &\leq
        C\int_{S^{1}}|\nabla_{x}^{2}u_{x}|^{2}+C.
    \end{split}
\end{equation}
Finally, we can get
\begin{equation}\label{eq:4.23}
    \frac{d}{dt}E_3(u) \le C\int_{S^{1}}|\nabla_{x}^{2}u_{x}|^{2}+C \le CE_3(u) + C.
\end{equation}
Here, the constant $C$ depends on $T$ and $\norm{u_0}_{W^{2,2}}$.
Thus, $E_{3}$ is bounded in $[0,T)$ by Gronwall's inequality.
Moreover, we also have
\begin{equation}\label{eq:4.24}
    \begin{split}
        \int_{S^{1}}|\nabla_{x}^{2}u_{x}|^{2}\leq C(T).
    \end{split}
\end{equation}
\\

\section{Global existence}
\label{Sec:5}

In this section we finish the proof of Theorem \ref{t1}.

Let $u$ be the local smooth solution of (\ref{e:cauchy}) which
exists on the maximal time interval $[0,T)$. If $T=\infty$, then
Theorem \ref{t1} holds true. Thus we only need to consider the case
where $T<\infty$.

From Lemma~\ref{l4} and Lemma~\ref{l5}, we have
\begin{equation*}
  \sup_{t\in[0,T)}\norm{u_x}_{H^{2,2}} \le C,
\end{equation*}
where $C$ depends on $T$ and the $W^{3,3}$-norm of the initial data
$u_0$. By Theorem~\ref{t:equivalence}, the $W^{3,2}$-Sobolev norm of
$u$ is bounded by
\begin{equation}\label{eee6}
  \sup_{t\in[0,T)}\norm{u}_{W^{3,2}} \le C\sup_{t\in[0,T)}\norm{u_x}_{H^{2,2}}^3
  \le C(T,\norm{u_0}_{W^{3,2}}).
\end{equation}

Thus by Theorem \ref{t3}, if $T$ is finite, we can find a local
solution $u_1$ of (\ref{e:cauchy}) satisfying the initial value
condition
$$u_1(x,0)=u(x,T-\epsilon),$$
where $0<\epsilon<T$ is a small number. By uniqueness
Theorem~\ref{t4}, we know that $u$ and $u_1$ coincides on the
overlapped time interval. Then from Lemma~\ref{l2}, one can see that
$u_1$ exists on the time interval $[0,T_1)$ with $T_1>0$ only
depending on the Sobolev norm $\norm{u(x,T-\epsilon)}_{W^{3,2}}$.
However, this norm is in turn decided by the initial data
$||u_0||_{W^{3,2}}$. The uniform bound~(\ref{eee6}) implies $T_1$ is
independent of $\epsilon$. Thus, by choosing $\epsilon$ sufficiently
small, we can glue $u$ and $u_1$ together to obtain a solution of
the Cauchy problem (\ref{e:cauchy}) on a larger time interval
$[0,T-\epsilon +T_1)$, where $T-\epsilon +T_1 >T$. This contradicts
to the maximality of $T$. Hence $T=\infty$ and the proof is done.

\section*{Acknowledgements}
The authors would like to thank Professor
Youde Wang for his inspiration and encouragement. They would also
like to thank Doctor Xiaowei Sun for many beneficial discussions.

%

\begin{thebibliography}{17}
%

\bibitem{A}
V. Adler; {Discretizations of the Landau-Lifshits equation}, Theor.
and Math. Phys. \textbf{124}(1), (2000)48¨C61.

\bibitem{BY}
M. Balakhnev; {Superposition formulas for integrable vector
evolution equations}, Theor. and Math. Phys. \textbf{154}(2),
(2008)261¨C267.

\bibitem{DC}
A. Degasperis, F. Calogero, \textit{Reduction technique for matrix
nonlinear evolution equations solvable by the spectral transform},
Preprint No. 151, Instituto di Fisica, Univ. di Roma, Rome (1979).

\bibitem{DW1}
W. Ding, Y. Wang; {Schr\"odinger flow of maps into symplectic
manifolds}, Sci. China Ser. A \textbf{41}(7) (1998), 746-755.

\bibitem{DW}
W. Ding, Y. Wang; {Local Schr\"odinger flow into {K\"ahler}
manifolds}, Sci. China Ser. A \textbf{44}(11) (2001), 1446-1464.

\bibitem{GS}
I. Golubchik, V. Sokolov: {Multicomponent generalization of the
hierarchy of the Landau- Lifshitz equation}. Theor. and Math. Phys.
\textbf{124}(1), 909-917 (2000).

\bibitem{ILMT}
S. Igonin, J. Vav De Leur, G. Manno, V. Trushkov; {Generalized
Landau-Lifshitz Systems and Lie Algebras Associated with Higher
Genus Curves}, arXiv:0811.4669v1 [nlin.SI] 28 Nov 2008

\bibitem{K}
C. Kenig, T. Lamm, D. Pollack, G. Staffilani, T. Toro; {The Cauchy
problem for Schr\"odinger flows into K\"ahler manifolds},
arXiv:0911.3141.

\bibitem{MS}
A. Meshkov, V. Sokolov; {Integrable Evolution Equations on the
N-Dimensional Sphere}. Comm. Math. Phys. \textbf{232},1-18(2002).

\bibitem{M}
R. Moser; {A Variational Problem Pertaining to Biharmonic Maps},
Communications in Partial Differential Equations, \textbf{33},
1654-1689, 2008

\bibitem{S}
T. Skrypnyk; {Deformations of loop algebras and integrable systems:
hierarchies of integrable equations}, Journal of Mathematical
Physics \textbf{45}(12), (2004).

\bibitem{SoW}
C. Song, Y. Wang; {Schr\"odinger Soliton from Lorentzian Manifolds},
arXiv:0910.1759.

\bibitem{SW}
X. Sun, Y. Wang; {KdV Geometric Flows on K\"ahler Manifolds},
preprint.

\bibitem{T}
M. Taylor; \textit{Partial Differential Equations III}, Springer
Verlag, New York, 1996.

\bibitem{V}
A.P. Veselov; {Finite-gap potentials and integrable systems on the
sphere with a quadradic potential}, Funct. Anal. and Appl.
\textbf{14}(1), 48-50(1980).

\bibitem{W}
Y. Wang; {Lecture on Geometric Flows on K\"ahler Manifolds},
preprint.

\bibitem{WW}
H. Wang, Y. Wang; {Global inhomogeneous Schrodinger flow},
International Journal Of Mathematics, \textbf{11}(8)
(2000),1079-1114.
\end{thebibliography}
%

\end{document}